\newtheorem{thm}{Theorem}[section]
\newtheorem{prop}[thm]{Proposition}
\newtheorem{cor}[thm]{Corollary}
\newtheorem{rema}[thm]{Remark}
\numberwithin{equation}{section}
\newcommand{\nor}[0]{\Upsilon}
\newcommand{\sou}[0]{\Delta}
\begin{document}

\title[Automorphism groups of N=2 superconformal spheres]{Automorphism groups of N=2 superconformal super-Riemann spheres}

\author{Katrina Barron}
\address{Department of Mathematics, University of Notre Dame,
Notre Dame, IN 46556}
\email{kbarron@nd.edu}

\subjclass[2000]{Primary: 32C11,  58A50, 17B66, 17B68, 32Q30; secondary: 81T60, 81T40, 51P05.}

\date{February 17, 2010}

\keywords{Supermanifold, superconformal, super-Riemann surface, Lie superalgebra, Lie supergroup, superconformal field theory}

\thanks{Supported by NSA grant MSPF-07G-169.}

\begin{abstract}
We determine the automorphism groups for the countably infinite family of N=2 superconformal equivalence classes of DeWitt N=2 superconformal super-Riemann surfaces with closed, genus-zero body.  We then analyze the Lie structure of these groups.   Under the correspondence between N=2 superconformal and N=1 superanalytic structures, the results extend to the determination of automorphism groups of N=1 superanalytic DeWitt super-Riemann surfaces with closed, genus-zero body.
\end{abstract}

\maketitle

\section{Introduction}

In \cite{B-uniformization} (cf. \cite{Manin1}, \cite{Manin2}), it is shown, in particular, that any N=2 superconformal super-Riemann surface with closed, genus-zero body is N=2 superconformally equivalent to one of a countably infinite family of uniformized N=2 superconformal super-Riemann spheres, denoted $S^2 \hat{\mathbb{C}}(n)$ for $n$ an integer.  In \cite{B-n2moduli} we analyzed the group of automorphisms for the equivalence class $S^2 \hat{\mathbb{C}}(0)$ as part of the larger study of the moduli space of certain N=2 superconformal super-Riemann spheres with tubes modeling worldsheet particle interactions in N=2 superconformal field theory.  In this paper, we determine the group of automorphisms for the remaining equivalence classes of genus-zero N=2 superconformal super-Riemann surfaces, and study their Lie structure.  In particular, we show that the super-dimension of the Lie supergroup of automorphisms of $S^2 \hat{\mathbb{C}}(n)$ has even dimension $4$ for $n \in \mathbb{Z}$, odd dimension $4$ for $|n|\leq 2$, and odd dimension $|n| + 2$ for $|n| \geq 2$.    

DeWitt N=2 superconformal super-Riemann surfaces are fiber bundles over Riemann surfaces with transition functions that satisfy certain properties called N=2 supersymmetry or N=2 superconformality.  These are the geometric structures underlying holomorphic, two-dimensional, N=2 superconformal field theory (cf. \cite{DPZ}, \cite{FMS},  \cite{LVW}, \cite{Wa}, \cite{Ge}).   As N=2 supersymmetric particles modeled as superstrings propagate through space-time, they sweep out an N=2 superconformal super-Riemann surface with half-infinite tubes called a worldsheet (see, for instance, \cite{B-n2moduli}).  Under certain meromorphicity conditions, the algebra of correlation functions governed by genus-zero nonsuper (resp. N=1 superconformal) worldsheet interactions has the structure of a vertex operator algebra (resp. an N=1 Neveu-Schwarz vertex operator superalgebra), \cite{H-book}, \cite{B-announce}-\cite{B-iso}.   However, the results of \cite{B-uniformization} indicate that the algebra of correlation functions governed by genus-zero N=2 superconformal worldsheets has as a substructure an N=2 Neveu-Schwarz vertex operator superalgebra \cite{B-axiomatic}, but in general will have substantially more structure.  This is due to the fact that, unlike in the nonsuper and N=1 super cases for which there is only one genus-zero surface up to global (N=1 super)conformal equivalence, in the N=2 super case, there is an infinite family of N=2 superconformally inequivalent genus-zero surfaces.   

To construct and study many of the aspects of N=2 superconformal field theory, one needs a description of the moduli space of genus-zero N=2 superconformal super-Riemann surfaces with half-infinite tubes attached; and to begin to study this structure, one needs to understand the genus-zero N=2 superconformal super-Riemann surfaces and their global N=2 superconformal automorphisms.  The determination and study of these automorphisms are the purpose of this paper.

There are two main approaches to supermanifolds: the ``concrete" or ``DeWitt" approach \cite{Fd}, \cite{D}, \cite{Ro}; and the ``ringed-space" approach \cite{Leites}, \cite{Manin1}, \cite{Manin2}.    The DeWitt approach and the ringed-space approach to supermanifolds are equivalent if one restricts the supermanifolds in the DeWitt approach to only allow for transition functions which do not include components that are odd functions of an even variable \cite{Batchelor}, \cite{Ro}.  However, for many applications to superconformal field theory, one needs to include these more general transition functions, which are naturally incorporated using the DeWitt approach.    Using the ringed-space approach, in order to incorporate the more general transition functions allowed in the DeWitt approach and in superconformal field theories, one must consider families of ringed-space supermanifolds over a given supermanifold.   For the purposes of this paper, we present our results in the concrete approach.   In particular, we are interested in extending the program developed by the author in \cite{B-announce}-\cite{B-iso} and \cite{B-change} in the N=1 superconformal case to the N=2 superconformal case.  This work in the N=1 case, developing a rigorous correspondence between the geometry of N=1 superconformal super-Riemann surfaces and the algebraic notion of N=1 Neveu-Schwarz vertex operator superalgebra, and applications to determining the change of variables formulas for N=1 Neveu-Schwarz vertex operator superalgebras, relied on the local N=1  infinitesimal superconformal transformations giving a representation of the N=1 Neveu-Schwarz algebra, which implied that local coordinates must have even and odd variables taking values in some underlying Grassmann algebra and involved component functions which are odd functions of an even variable.  The N=2 case will similarly depend on even and odd variables taking values in some underlying Grassmann algebra and odd superfunctions of an even variable.

Many consider the need for a choice of Grassmann algebra the main drawback of the concrete approach.  However within the concrete approach (in particular allowing for odd functions of an even variable), certain functorial properties can be incorporated into the definition of superfunction and supermanifold alleviating the need for a particular choice of underlying Grassmann algebra (cf. \cite{Schwarz1984}); or even further generalizations can be made by replacing the underlying Grassmann algebras with almost nilpotent superalgebras \cite{KS}.   Although we restrict most of our discussion in this paper to supermanifolds over Grassmann algebras, the extension to almost nilpotent algebras following \cite{KS} is a natural one, as is the functorial nature of our definitions; see Remarks \ref{functorial-remark} and \ref{functorial-remark2}.

We note here that in \cite{DRS}, it was shown that N=2 superconformal super-Riemann surfaces are equivalent to N=1 superanalytic super-Riemann surfaces.  The same correspondence was obtained earlier, as the authors of \cite{DRS} mention, in an unpublished letter of Deligne to Manin.  We discuss this equivalence in Section \ref{DRS-section} in the setting of concrete supermanifolds.  Under this correspondence the results of this paper extend to the equivalence classes of genus-zero N=1 superanalytic super-Riemann surfaces and their automorphism groups.  

\section{Preliminaries}\label{preliminaries-section}

In this section, we recall the notion of superalgebra, Lie superalgebra,  Grassmann algebra, superanalytic function, N=2 superconformal function, supermanifold and N=2 superconformal super-Riemann surface following, for instance \cite{B-n2moduli}, \cite{D}, \cite{Ro}.  

\subsection{Superalgebras}

Let $\mathbb{C}$ denote the complex numbers, let $\mathbb{Z}$ denote the integers, and let $\mathbb{Z}_2$ denote the integers modulo 2.  For a $\mathbb{Z}_2$-graded vector space $V = V^0 \oplus V^1$, over $\mathbb{C}$, define the {\it sign function} $\eta$ on the homogeneous subspaces of $V$ by $\eta(v) = j$, for $v \in V^j$ and $j \in \mathbb{Z}_2$.  If $\eta(v) = 0$, we say that $v$ is {\it even}, and if $\eta(v) = 1$, we say that $v$ is {\it odd}.    A {\it superalgebra} is an (associative) algebra $A$ (with identity $1 \in A$), such that: (i) $A$ is a $\mathbb{Z}_2$-graded algebra; (ii) $ab = (-1)^{\eta(a)\eta(b)} ba$ for $a,b$ homogeneous in $A$.

A $\mathbb{Z}_2$-graded vector space $\mathfrak{g} = \mathfrak{g}^0 \oplus \mathfrak{g}^1$ is said to be a {\it Lie superalgebra} if it has a bilinear operation $[\cdot,\cdot]$ on $\mathfrak{g}$ such that for $u,v$ homogeneous in $\mathfrak{g}$: (i) $\; [u,v] \in {\mathfrak g}^{(\eta(u) + \eta(v))\mathrm{mod} \; 2}$;  
(ii) skew symmetry holds $[u,v] = -(-1)^{\eta(u)\eta(v)}[v,u]$; (iii) the following Jacobi identity holds 
\[(-1)^{\eta(u)\eta(w)}[[u,v],w] + (-1)^{\eta(v)\eta(u)}[[v,w],u]+ \; (-1)^{\eta(w)\eta(v)}[[w,u],v] = 0. \]

For any $\mathbb{Z}_2$-graded associative algebra $A$ and for $u,v \in A$ of homogeneous sign, we can define $[u,v] = u v - (-1)^{\eta(u)\eta(v)} v u$, making $A$ into a Lie superalgebra.  The algebra of endomorphisms of $A$, denoted $\mbox{End} \; A$, has a natural $\mathbb{Z}_2$-grading induced {}from that of $A$, and defining $[X,Y] = X Y - (-1)^{\eta(X) \eta(Y)} Y X$ for $X,Y$ homogeneous in $\mbox{End} \; A$, this gives $\mbox{End} \; A$ a Lie superalgebra structure.  An element $D \in 
(\mbox{End} \; A)^j$, for $j \in \mathbb{Z}_2$, is called a {\it superderivation of sign $j$} (denoted $\eta(D) = j$) if $D$ satisfies the {\it super-Leibniz rule}
\begin{equation}\label{leibniz} 
D(uv) = (Du)v + (-1)^{\eta(D) \eta(u)} uDv  
\end{equation} 
for $u,v \in A$ homogeneous.

Let $\mathfrak{h}$ be a Lie superalgebra, and let $\mathrm{End} (\mathfrak{h})$ denote the Lie superalgebra of endomorphisms from $\mathfrak{h}$ to itself.  Then $\mathrm{End} (\mathfrak{h})$ is a Lie superalgebra.  We call an element $D \in \mathrm{End} (\mathfrak{h})$ a Lie superalgebra derivation if it satisfies 
\begin{equation}
D( [u,v] ) = [D(u), v] + (-1)^{\eta(D) \eta(u)} [u, D(v)]
\end{equation}
for all $u,v \in \mathfrak{h}$.   We denote the set of all Lie superalgebra derivations of $\mathfrak{h}$ by 
$\mathrm{Der} (\mathfrak{h})$, and note that $\mathrm{Der} (\mathfrak{h})$ is a Lie sub-superalgebra of $\mathrm{End} (\mathfrak{h})$.

Given two Lie superalgebras $\mathfrak{g}$ and $\mathfrak{h}$ and a Lie superalgebra homomorphism 
\begin{equation}
\sigma: \mathfrak{g} \longrightarrow \mathrm{Der} ( \mathfrak{h}),
\end{equation}
we can put a Lie superalgebra structure on $\mathfrak{g} \times \mathfrak{h}$ by defining
\begin{equation}
[u + v , u' + v']  = [u, u'] + \sigma_u(v') - (-1)^{\eta(v) \eta(u')} \sigma_{u'}(v) + [v, v'].
\end{equation} 
This is called the {\it semi-direct} product of $\mathfrak{g}$ with $\mathfrak{h}$ and is denoted $\mathfrak{g} \times_\sigma \mathfrak{h}$. 

Let $V$ be a vector space.  The exterior algebra generated by $V$, denoted  $\bigwedge (V)$, has the structure of a superalgebra.  Let $\mathbb{N}$ denote the nonnegative integers.  For $L \in \mathbb{N}$, fix $V_L$ to be  an $L$-dimensional vector space over $\mathbb{C}$ with basis $\{\zeta_1,\zeta_2, \ldots, \zeta_L\}$ such that $V_L \subset V_{L+1}$.  We denote $\bigwedge(V_L)$ by $\bigwedge_L$ and call this the {\it Grassmann algebra on $L$  generators}.  In other words, {}from now on we will consider the Grassmann algebras to have a fixed sequence of generators.  Note that $\bigwedge_L \subset \bigwedge_{L+1}$, and taking the direct limit as $L \rightarrow \infty$, we have the {\it infinite Grassmann algebra} denoted by $\bigwedge_\infty$.  Then $\bigwedge_L$ and $\bigwedge_\infty$ are the associative algebras over $\mathbb{C}$ with generators $\zeta_j$, for $j = 1, 2, \dots, L$ and $j = 1, 2, \dots$, respectively, and with relations $\zeta_j \zeta_k = - \zeta_k \zeta_j$,  for $j \neq k$, and $\zeta_j^2 = 0$.  We use the notation $\bigwedge_*$ to denote a Grassmann algebra, finite or infinite.

Let 
\begin{eqnarray*}
J^0_L \! \!  &=&  \! \! \bigl\{ (j) = (j_1, j_2, \ldots, j_{2n}) \; | \; j_1 < j_2 < \cdots < j_{2n}, \; j_l \in \{1, 2, \dots, L\}, \; n \in \mathbb{N} \bigr\}, \\ 
J^1_L  \! \! &=&  \! \! \bigl\{(j) = (j_1, j_2, \ldots, j_{2n + 1}) \; | \; j_1 < j_2 < \cdots < j_{2n + 1}, \; j_l \in \{1, 2, \dots, L\}, \;  n \in \mathbb{N} \bigr\},
\end{eqnarray*}
and $J_L = J^0_L \cup J^1_L$.  Let $\mathbb{Z}_+$ denote the positive integers, and let
\begin{eqnarray*}
J^0_\infty \! \! &=& \! \! \bigl\{(j) = (j_1, j_2, \ldots, j_{2n})\; | \; j_1 < j_2 < \cdots < j_{2n}, \; j_l \in \mathbb{Z}_+, \; n \in \mathbb{N} \bigr\}, \\
J^1_\infty \! \! &=& \! \! \bigl\{(j) = (j_1, j_2, \ldots, j_{2n + 1})\; | \; j_1 < j_2 < \cdots < j_{2n + 1}, \; j_l \in \mathbb{Z}_+, \; n \in \mathbb{N} \bigr\},
\end{eqnarray*}
and $J_\infty = J^0_\infty \cup J^1_\infty$.  We use $J^0_*$, $J^1_*$, and $J_*$ to denote $J^0_L$ or $J^0_\infty$, $J^1_L$ or $J^1_\infty$, and $J_L$ or $J_\infty$, respectively.  Note that $(j) = (j_1,\dots,j_{2n})$ for $n = 0$ is in $J^0_*$, and we denote this element by $(\emptyset)$.  The $\mathbb{Z}_2$-grading of $\bigwedge_*$ is given explicitly by
\begin{eqnarray*}
\mbox{$\bigwedge_*^0$} \! &=& \! \Bigl\{ \sum_{(j) \in J^0_*} a_{(j)}\zeta_{j_{1}}\zeta_{j_{2}} \cdots \zeta_{j_{2n}} \; \big\vert \; a_{(j)} \in \mathbb{C}, \; n \in \mathbb{N} \Bigr\}\\ 
\mbox{$\bigwedge_*^1$} \! &=& \! \Bigl\{ \sum_{(j) \in J^1_*} a_{(j)}\zeta_{j_{1}}\zeta_{j_{2}} \cdots \zeta_{j_{2n + 1}} \; \big\vert  \; a_{(j)} \in \mathbb{C}, \; n \in \mathbb{N}
\Bigr\} . 
\end{eqnarray*} 

We can also decompose $\bigwedge_*$ into {\it body}, $(\bigwedge_*)_B = \{ a_{(\emptyset)} \in \mathbb{C} \}$,  and {\it soul} 
\[(\mbox{$\bigwedge_*$})_S \; = \; \Bigl\{\sum_{(j) \in J_* \smallsetminus \{(\emptyset) \}} \! 
a_{(j)} \zeta_{j_1} \zeta_{j_2} \cdots \zeta_{j_n} \; \big\vert \; a_{(j)} \in \mathbb{C}  \Bigr\}\] 
subspaces such that $\bigwedge_* = (\bigwedge_*)_B \oplus (\bigwedge_*)_S$.  For $a \in \bigwedge_*$, we write $a = a_B + a_S$ for its body and soul decomposition.  We will use both notations $a_B$ and $a_{(\emptyset)}$ for the body of a supernumber $a \in \bigwedge_*$ interchangeably.  

For $n \in \mathbb{N}$, we introduce the notation $\bigwedge_{*>n}$ to denote a finite Grassmann algebra $\bigwedge_L$ with $L > n$ or an infinite Grassmann algebra.  We will use the corresponding index notations for the corresponding indexing sets $J^0_{*>n}, J^1_{*>n}$ and $J_{*>n}$.

\subsection{Superfunctions} 

For $n \in \mathbb{N}$, let  $U$ be a subset of $\bigwedge_*^0\oplus (\bigwedge_*^1)^n$.  A {\it $\bigwedge_*$-super-function} $H$ on $U$ in $(1,n)$-variables is a map $H: U \longrightarrow \bigwedge_*$,  $(z ,\theta_1,\dots, \theta_n) \mapsto H(z ,\theta_1,\dots, \theta_n)$
where $z$ is an even variable in $\bigwedge_*^0$ and $\theta_j$, for $j = 1,\dots, n$,  are odd variables in $\bigwedge_*^1$.  If $H$ takes values only in $\bigwedge_*^0$ (resp.  $\bigwedge_*^1$), we say that $H$ is an {\it even} (resp. {\it odd}) superfunction.  Let $f(z_B)$ be a complex analytic function in $z_B$.  For $z \in \bigwedge_*^0$, define
\begin{equation}\label{more-than-one-variable}
f(z) = \! \sum_{l \in \mathbb{N}} \frac{z_S^l }{l !} \biggl(\frac{\partial \; \;}{\partial z_B}\biggr)^l  f(z_B) .
\end{equation}

Consider the projection
\begin{equation} \label{projection-onto-body}
\pi^{(1,n)}_B : \mbox{$\bigwedge_*^0$} \oplus (\mbox{$\bigwedge_*^1$})^n  \longrightarrow  \mathbb{C}, \ \ \  \qquad (z, \theta_1,\dots, \theta_n) \mapsto  z_B. 
\end{equation}
Let $U \subseteq \bigwedge_{*>n-1}^0 \oplus (\bigwedge_{*>n-1}^1)^n$, and let $H$ be a $\bigwedge_{*>n-1}$-superfunction in $(1,n)$-variables defined on $U$. Then $H$ is said to be {\it superanalytic} if $H$ is of the form
\begin{equation}
H(z,\theta_1,\dots,\theta_n) = \sum_{ (j) \in J_n} \theta_{j_1} \cdots \theta_{j_{l}} f_{(j)}(z) , 
\end{equation} 
where each $f_{(j)}$ is of the form 
\begin{equation}\label{restrict-coefficients}
f_{(j)}(z) = \sum_{(k) \in J_{ * - n}} f_{(j),(k)}(z) \zeta_{k_1}\zeta_{k_2} \cdots \zeta_{k_{s}}, 
\end{equation} 
and each $f_{(j),(k)}(z_B)$ is analytic  in $z_B$ for $z_B \in  U_B = \pi_B^{(1,n)} (U) \subseteq \mathbb{C}$.   

We require the even and odd variables to be in $\bigwedge_{* > n-1}$, and we restrict the coefficients of the $f_{(j)}$'s to be in $\bigwedge_{* - n} \subseteq \bigwedge_{*>n-1}$ in order for the partial derivatives with respect to each of the odd variables to be well defined and for multiple partials to be well defined (cf. \cite{D}, \cite{B-memoir}, \cite{Ro}, \cite{B-n2moduli}).    In the language of \cite{Ro}, these $\bigwedge_{*>n-1}$-superfunctions are called $GC^\omega$ functions on $\mathbb{C}_S^{1,n}$ if $\bigwedge_* = \bigwedge_\infty$, and are called $GHC^\omega$ functions on 
$\mathbb{C}_{S[L]}^{1,n}$ if $\bigwedge_* = \bigwedge_L$ for $L\geq n$.   In particular, they subsume the class of $HC^\omega$ functions as defined in \cite{Ro}. 

We define the {\it DeWitt topology on $\bigwedge_{* }^0 \oplus (\bigwedge_{* }^1)^n$} by letting a subset $U$ of $\mbox{$\bigwedge_{*}^0$} \oplus (\mbox{$\bigwedge_{* }^1$})^n$ be an open set in the DeWitt topology if and only if $U = (\pi^{(1,n)}_B)^{-1} (V)$ for some open set $V \subseteq \mathbb{C}$.  Note that the natural domain of a superanalytic $\bigwedge_{* > n-1}$-superfunction in $(1,n)$-variables is an open set in the DeWitt topology.  

Let $(\bigwedge_*)^\times$ denote the set of invertible elements in $\bigwedge_*$.  Then $(\bigwedge_*)^\times = \{a \in \bigwedge_* \; | \; a_B \neq 0 \}$,  since 
$\frac{1}{a} = \frac{1}{a_B + a_S} = \sum_{n \in \mathbb{N}} \frac{(-1)^n a_S^n}{a_B^{n + 1}}$
is well defined if and only if $a_B \neq 0$. 

\begin{rema}\label{functorial-remark}
{\em
Recall that $\bigwedge_L \subset \bigwedge_{L+1}$ for $L \in \mathbb{N}$, and note that {}from (\ref{more-than-one-variable}), any superanalytic $\bigwedge_L$-superfunction, $H_L$, in $(1,n)$-variables for $L \geq n$ can naturally be extended to a superanalytic $\bigwedge_{L'}$-superfunction in $(1,n)$-variables for $L'>L$ and hence to a superanalytic $\bigwedge_\infty$-superfunction.  Conversely, if $H_{L'}$ is a superanalytic $\bigwedge_{L'}$-superfunction (or $\bigwedge_\infty$-superfunction) in $(1,n)$-variables for $L' >n$, then we can restrict $H_{L'}$ to a superanalytic $\bigwedge_L$-superfunction for $L'> L\geq n$ by restricting $(z,\theta_1,\dots, \theta_n) \in 
\bigwedge_L^0 \oplus (\bigwedge_L^1)^n$ and setting $f_{(j)} \equiv 0$ if $(j) \notin J_{L-n}$.  If $H_{L'}$ satisfies $f_{(j)} \equiv 0$ if $(j) \notin J_{L-n}$, then this restriction to $\bigwedge_L$ and then extension to $\bigwedge_{L'}$ results in the identity mapping, i.e., leaves $H_{L'}$ unchanged.  Thus any superanalytic function over $\bigwedge_{L'}$ in $(1,n)$-variables with coefficient functions $f_{(j)} = 0$ for $(j) \notin J_{L-n}$,  for $L\leq L'$, can be thought of as a functor from the category of Grassmann algebras $\bigwedge_*$ with $*\geq L+n$ to superanalytic functions over $\bigwedge_*$ in $(1,n)$-variables (cf. \cite{Schwarz1984}, \cite{KS}).   
 }
\end{rema}

\subsection{Superconformal $(1,2)$-superfunctions}\label{superconformal-section}

Let $z$ be an even variable in $\bigwedge_{*>1}^0$, and let $\theta^+$ and $\theta^-$ be odd variables in $\bigwedge_{*>1}^1$.   Define
\begin{equation}
D^\pm = \frac{\partial}{\partial \theta^\pm} + \theta^\mp \frac{\partial}{\partial z}.
\end{equation}
Then $D^\pm$ are odd superderivations on $\bigwedge_{*>1}$-superfunctions in $(1,2)$-variables which are superanalytic in some DeWitt open subset $U \subseteq \bigwedge_{*>1}^0 \oplus (\bigwedge_{*>1}^1)^2$.  Note that
\begin{eqnarray}
[D^\pm, D^\pm] & = & 2(D^\pm)^2 \ = \ 0\\ 
\left[D^+,D^-\right] &=& D^+D^- + D^-D^+ \ = \ 2 \frac{\partial}{\partial z} .
\end{eqnarray}

Let
\begin{eqnarray}
H : U \subseteq \mbox{$\bigwedge_{*>1}^0$} \oplus (\mbox{$\bigwedge_{*>1}^1$})^2
&\longrightarrow& \mbox{$\bigwedge_{*>1}^0$} \oplus (\mbox{$\bigwedge_{*>1}^1$})^2 \label{H-superanal}\\
(z,\theta^+,\theta^-) \ &\mapsto& (\tilde{z},\tilde{\theta}^+,\tilde{\theta}^-) \nonumber
\end{eqnarray} 
be superanalytic, i.e., $\tilde{z} = H^0(z, \theta^+, \theta^-)$ is an even superanalytic $(1,2)$-superfunction, and $\tilde{\theta}^\pm = H^\pm(z, \theta^+, \theta^-)$ are odd superanalytic $(1,2)$-superfunctions.   Then $D^+$ and $D^-$ transform under $H(z,\theta^+,\theta^-)$ by
\begin{equation}\label{transform-Dplus}
D^\pm = (D^\pm\tilde{\theta}^\pm)\tilde{D}^\pm + (D^\pm\tilde{\theta}^\mp) \frac{\partial}{\partial \tilde{\theta}^\mp} + (D^\pm\tilde{z} - \tilde{\theta}^\mp D^\pm\tilde{\theta}^\pm) \frac{\partial}{\partial \tilde{z}} .
\end{equation}

We define an {\it N=2  superconformal function} $H$ on a DeWitt open subset $U$ of $\bigwedge_{*>1}^0 \oplus (\bigwedge_{*>1}^1)^2$  to be a superanalytic $(1,2)$-superfunction from $U$ into $\bigwedge_{*>1}^0 \oplus (\bigwedge_{*>1}^1)^2$ under which $D^+$ and $D^-$ transform homogeneously of degree one.   That is, $H$ transforms $D^\pm$ by non-zero superanalytic functions times $\tilde{D}^\pm$, respectively.   Since such a superanalytic function $H(z,\theta^+,\theta^-) = (\tilde{z}, \tilde{\theta}^+,\tilde{\theta}^-)$  transforms $D^+$ and $D^-$ according to  (\ref{transform-Dplus}), $H$ is superconformal if and only if, in addition to being superanalytic, $H$ satisfies
\begin{eqnarray}
D^\pm \tilde{\theta}^\mp &=& 0, \label{basic-superconformal-condition1} \\
D^\pm\tilde{z} - \tilde{\theta}^\mp D^\pm\tilde{\theta}^\pm &=& 0, \label{basic-superconformal-condition4}
\end{eqnarray}
for $D^\pm \tilde{\theta}^\pm$ not identically zero, thus transforming $D^\pm$ by $D^\pm = (D^\pm \tilde{\theta}^\pm)\tilde{D}^\pm$.   These conditions imply that we can write $H(z,\theta^+,\theta^-) = (\tilde{z}, \tilde{\theta}^+, \tilde{\theta}^-)$ as
\begin{eqnarray}
\qquad \qquad \tilde{z} &=& f(z) + \theta^+ g^+(z) \psi^-(z) + \theta^- g^-(z) \psi^+(z) + \theta^+ \theta^- (\psi^+(z)\psi^-(z))' \label{superconformal-condition1} \\
\tilde{\theta}^\pm &=& \psi^\pm(z) + \theta^\pm g^\pm(z) \pm \theta^+ \theta^-(\psi^\pm)'(z) \label{superconformal-condition3}
\end{eqnarray}
for $f$, $g^\pm$ even and $\psi^\pm$ odd superanalytic $(1,0)$-superfunctions in $z$, satisfying the condition 
\begin{equation}\label{superconformal-condition4}
f'(z) \; = \; (\psi^+)'(z)\psi^-(z) - \psi^+(z) (\psi^-)'(z) + g^+(z) g^-(z) ,
\end{equation}
and we also require that $D^+ \tilde{\theta}^+$ and $D^- \tilde{\theta}^-$ not be identically zero. 

Thus  an N=2 superconformal function $H$ is uniquely determined by the superanalytic functions $f(z)$, $\psi^\pm(z)$,  and $g^\pm(z)$ satisfying the condition (\ref{superconformal-condition4}). 

\begin{rema} {\em The setting above is in the  ``homogeneous" coordinate system, denoted by N=2 supercoordinates $(z, \theta^+, \theta^-)$.    There is  another commonly used coordinate system for N=2 super-settings, namely the ``nonhomogeneous" coordinates, denoted by 
$(z, \theta_1, \theta_2)$, where $\theta_1 = \frac{1}{\sqrt{2}} \left( \theta^+ + \theta^- \right)$ and  $\theta_2 = - \frac{i}{\sqrt{2}} \left( \theta^+ - \theta^- \right)$, or equivalently $\theta^\pm  = \frac{1}{\sqrt{2}} \left( \theta_1 \pm  i\theta_2 \right)$.  This is a standard transformation in N=2 superconformal field theory (cf. \cite{DRS}, \cite{B-n2moduli}, \cite{B-axiomatic}, \cite{B-uniformization}), and the terms homogeneous and nonhomogeneous were introduced in \cite{B-n2moduli} along with reasons for this nomenclature.   These reasons lie in the fact that the $U(1)$-current algebra in the Lie superalgebra of infinitesimal N=2 superconformal transformations (see (\ref{Virasoro-relation})-(\ref{Neveu-Schwarz-relation-last}) below) acts homogeneously (resp. nonhomogeneously) on odd elements when in the homogeneous (resp. nonhomogeneous) coordinates.  We will continue to use the homogeneous coordinate system for the purposes of this paper, as the results are more easily presented in this system (cf. \cite{B-uniformization}). }
\end{rema}

\subsection{Complex supermanifolds and N=2 superconformal super-Riemann surfaces}  

A {\em DeWitt $(1,n)$-dimensional supermanifold over $\bigwedge_*$} is a topological space $X$ with a countable basis which is locally homeomorphic to an open subset of $\bigwedge_*^0 \oplus (\bigwedge_*^1)^n$ in the DeWitt topology.  A {\em DeWitt $(1,n)$-chart on $X$ over $\bigwedge_*$} is 
a pair $(U, \Omega)$ such that $U$ is an open subset of $X$ and $\Omega$ is a homeomorphism of $U$ onto an open subset of $\bigwedge_*^0 \oplus (\bigwedge_*^1)^n$ in the DeWitt topology.  A {\em superanalytic atlas of DeWitt $(1,n)$-charts on $X$ over $\bigwedge_{* > n-1}$} is a family of charts $\{(U_{\alpha}, \Omega_{\alpha})\}_{\alpha \in A}$ satisfying 
 
(i) Each $U_{\alpha}$ is open in $X$, and $\bigcup_{\alpha \in A} U_{\alpha} = X$. 

(ii) Each $\Omega_{\alpha}$ is a homeomorphism {}from $U_{\alpha}$ to an open set in $\bigwedge_{* > n-1}^0 \oplus (\bigwedge_{* > n-1}^1)^n$ in the DeWitt topology, such that $\Omega_{\alpha} \circ \Omega_{\beta}^{-1}: \Omega_{\beta}(U_\alpha \cap U_\beta) \longrightarrow \Omega_{\alpha}(U_\alpha \cap U_\beta)$ is superanalytic for all non-empty $U_{\alpha} \cap U_{\beta}$, i.e., $\Omega_{\alpha} \circ \Omega_{\beta}^{-1} = (\tilde{z}, \tilde{\theta}_1, \dots, \tilde{\theta}_n)$ where $\tilde{z}$ and $\tilde{\theta}_j$, for $j = 1,\dots, n$, are even and odd, respectively, superanalytic $\bigwedge_{* > n-1}$-superfunctions in $(1,n)$-variables.

Such an atlas is called {\em maximal} if, given any chart $(U, \Omega)$ such that $\Omega \circ \Omega_{\beta}^{-1} : \Omega_{\beta} (U \cap U_\beta) \longrightarrow \Omega (U \cap U_\beta)$ 
is a superanalytic homeomorphism for all $\beta$, then $(U, \Omega) \in \{(U_{\alpha}, \Omega_{\alpha})\}_{\alpha \in A}$.
 
A {\em DeWitt $(1,n)$-superanalytic supermanifold over $\bigwedge_{* > n-1}$} is a DeWitt $(1,n)$-dimensional supermanifold $M$ together with a maximal superanalytic atlas of DeWitt $(1,n)$-charts over $\bigwedge_{* > n-1}$.  In the language of \cite{Ro}, if $\bigwedge_*$ is infinite dimensional, then these supermanifolds are called $(1,n)$-$GC^\omega$ DeWitt supermanifolds.  If the transition functions for $M$ are restricted to be $HC^\omega$ functions, then we call $M$ an {\it $\mathcal{H}$-supermanifold}.  These are the supermanifolds that are most often studied in the ringed-space approach, cf.  \cite{Leites}, \cite{Manin1}, \cite{Manin2}, \cite{Ro}.

Given a DeWitt $(1,n)$-superanalytic supermanifold $M$ over $\bigwedge_{* > n-1}$, define an equivalence relation $\sim$ on M by letting $p \sim q$ if and only if there exists $\alpha \in A$ such that $p,q \in U_\alpha$ and $\pi_B^{(1,n)} (\Omega_\alpha (p)) = \pi_B^{(1,n)} (\Omega_\alpha (q))$
where $\pi_B^{(1,n)}$ is the projection given by (\ref{projection-onto-body}).  Let $p_B$ denote the equivalence class of $p$ under this equivalence relation.  Define the {\it body} $M_B$ of $M$ to be the 
complex manifold with analytic structure given by the coordinate charts $\{((U_\alpha)_B, (\Omega_\alpha)_B) \}_{\alpha \in A}$ where $(U_\alpha)_B = \{ p_B \; | \; p \in U_\alpha \}$, and $(\Omega_\alpha)_B : (U_\alpha)_B \longrightarrow \mathbb{C}$ is given by $(\Omega_\alpha)_B (p_B) = \pi_B^{(1,n)} \circ \Omega_\alpha (p)$. We define the genus of $M$ to be the genus of $M_B$.

Note that $M$ is a complex fiber bundle over the complex manifold $M_{B}$; the fiber is the complex vector space $(\bigwedge_{* > n-1}^0)_S \oplus (\bigwedge_{* > n-1}^1)^n$.  This bundle is not in general a vector bundle since the transition functions are in general nonlinear.  

An {\em N=2 superconformal super-Riemann surface over $\bigwedge_{*>1}$} is a DeWitt $(1,2)$-superanalytic supermanifold over $\bigwedge_{*>1}$ with coordinate atlas $\{(U_{\alpha}, \Omega_{\alpha})\}_{\alpha \in A}$ such that the coordinate transition functions $\Omega_{\alpha} \circ \Omega_{\beta}^{-1}$ in addition to being superanalytic are also N=2 superconformal for all non-empty $U_{\alpha} \cap U_{\beta}$.

Since the condition that the coordinate transition functions be N=2 superconformal instead of merely superanalytic is such a strong condition (unlike in the nonsuper case), we again stress the distinction between an N=2 superanalytic super-Riemann surface which has {\it superanalytic} transition functions versus an N=2 superconformal super-Riemann surface which has N=2 {\it superconformal} transition functions.  In the literature one will find the term ``super-Riemann surface" or ``Riemannian supermanifold" used for both merely superanalytic structures (cf. \cite{D}) and for superconformal structures (cf. \cite{Fd}).   

From now on, we will focus on N=1 superanalytic super-Riemann surfaces, that is DeWitt $(1,1)$-superanalytic supermanifolds over $\bigwedge_{*>0}$, and N=2 superconformal super-Riemann surfaces. 

Let $M_1$ and $M_2$ be N=2 superconformal (resp. N=1 superanalytic) super-Riemann surfaces with coordinate atlases $\{(U_{\alpha},$ $\Omega_{\alpha})\}_{\alpha \in A}$ for $M_1$ and $\{(V_{\beta}, \Xi_{\beta})\}_{\beta \in B}$ for $M_2$.  A map $F: M_1 \longrightarrow M_2$ is said to be {\it N=2 superconformal} (resp. {\it N=1 superanalytic})  if $\Xi_\beta \circ F \circ \Omega_\alpha^{-1}: \Omega_\alpha (U_\alpha \cap F^{-1} (V_\beta)) \longrightarrow \Xi_\beta(V_\beta)$ is N=2 superconformal (resp. N=1 superanalytic)  for all $\alpha \in A$ and $\beta \in B$ with $U_\alpha \cap F^{-1} (V_\beta) \neq \emptyset$.  If in addition, $F$ is bijective, then we say that $M_1$ and $M_2$ are {\it N=2 superconformally equivalent} (resp. {\it N=1 superanalytically equivalent}).  

\begin{rema}\label{functorial-remark2}
{\em  Just as a single superanalytic function over a certain Grassmann algebra can be thought of as a functor from a (sub)category of Grassmann algebras to superanalytic functions over any one of these Grassmann algebras (see Remark \ref{functorial-remark}), so can a DeWitt $(1,n)$-superanalytic supermanifold over a certain Grassmann algebra be thought of as a functor from a (sub)category of Grassmann algebras to DeWitt $(1,n)$-superanalytic supermanifolds over any one of these Grassmann algebras.  Let $M$ be an $(1,n)$-superanalytic supermanifold over $\bigwedge_{L'}$ for $L'\geq n$ with coordinate atlas given by $\{(U_\alpha, \Omega_\alpha)\}_{\alpha \in A}$.  If the coordinate transition functions for $M$ are such that the coefficient functions $f_{(j)} \equiv 0$ for $(j) \notin J_{L-n}$ for some $L'>L \geq n$, then the submanifold of $M$ given by $\bigcup_{\alpha \in A} \Omega_\alpha^{-1} ((\Omega_\alpha (U_\alpha))_B \times (\bigwedge_L)_S )$ is naturally a $(1,n)$-superanalytic supermanifold over $\bigwedge_L$.  Moreover, if $M_1$ and $M_2$ result in the same submanifold under this restriction from $\bigwedge_{L'}$ to $\bigwedge_L$, then $M_1 = M_2$.  Thus there is a natural and unique extension of any $(1,n)$-superanalytic supermanifold over $\bigwedge_L$ to a $(1,n)$-superanalytic supermanifold over $\bigwedge_*$ for $*>L$. 
}
\end{rema}

\subsection{The equivalence of N=2 superconformal and N=1 superanalytic DeWitt super-Riemann surfaces}\label{DRS-section}

In this section, we recall (and slightly extend) some results from \cite{DRS} establishing an equivalence between N=1 superanalytic  super-Riemann surfaces and N=2 superconformal super-Riemann surfaces.  Our main results in this paper, Theorems \ref{auto-groups-thm}, \ref{infinitesimals-thm}, and \ref{last-cor}  and Corollary \ref{infinitesimals-cor}, are formulated and proved for N=2 superconformal super-Riemann surfaces.  However, using Proposition \ref{DRS-prop} from this section, our results are easily formulated in the corresponding N=1 superanalytic setting.  

Although we follow \cite{DRS}, modified slightly to our setting, there are discrepancies between some of our formulas and those given in \cite{DRS}.  For instance, there is a typo in \cite{DRS} in the transformation from the nonhomogeneous coordinate system $(z, \theta_1, \theta_2)$ to the homogeneous coordinate system $(z, \theta^+, \theta^-)$;  the typo is a factor of $1/2$ erroneously introduced into the $D^\pm$ superderivations after the transformation of coordinates, and this factor is carried throughout their calculations. 

Let $U_B$ be an open set in $\mathbb{C}$.  Let  $\mathcal{SC}_{*>1}(2, U_B)$ be the set of invertible N=2 superconformal functions defined on the DeWitt open set $U_B \times ((\bigwedge_{*>1}^0)_S \oplus (\bigwedge_{*>1}^1)^2)$ in $\bigwedge_{*>1}^0 \oplus (\bigwedge_{*>1}^1)^2$.  Let $\mathcal{SA}_{*>1}(1, U_B)$ be the set of invertible N=1 superanalytic functions $H$ defined on the DeWitt open set $U_B \times (\bigwedge_{*>1})_S$ in $\bigwedge_{*>1}$ such that  the coefficients of the functions defining $H$ are restricted to lie in $\bigwedge_{*-2}$ rather than just in $\bigwedge_{*-1}$; that is in (\ref{restrict-coefficients}), we take $(k) \in J_{*-2}$ rather than $(k) \in J_{*-1}$.  

Define the map
\begin{eqnarray}
\mathcal{F}_1 : \mathcal{SC}_{*>1}(2, U_B) & \longrightarrow & \mathcal{SA}_{*>1}(1, U_B)\\
H & \mapsto & \mathcal{F}_1(H) \nonumber
\end{eqnarray}
as follows: For $H \in \mathcal{SC}_{*>1}(2, U_B)$, then $H(z, \theta^+, \theta^-) = (\tilde{z}, \tilde{\theta}^+, \tilde{\theta}^-)$ is of the form (\ref{superconformal-condition1})-(\ref{superconformal-condition4}) for even functions $f$ and $g^\pm$ and odd functions $\psi^\pm$.    Define 
\begin{equation}\label{N=2-to-N=1}
\mathcal{F}_1(H) (z, \theta) = (f(z) + \psi^+(z) \psi^-(z) +  2\theta g^+(z) \psi^-(z), \   \psi^+(z) + \theta g^+(z))
\end{equation}

This invertible N=1 superanalytic function $\mathcal{F}_1(H)$ can be thought of as arising from performing the N=2 superanalytic coordinate transformation 
\begin{equation}
(z, \ \theta^+, \ \theta^-) \mapsto (u, \ \eta,  \ \alpha) = (z + \theta^+ \theta^-, \ \theta^+, \ \theta^-).
\end{equation}
Under this transformation, we obtain the N=2 superanalytic function in the even variable $u$ and the two odd variables $\eta$ and $\alpha$ given by 
\begin{eqnarray}
\qquad \qquad \tilde{u} &=& f(u) + \psi^+(u) \psi^-(u) +  2\eta g^+(u) \psi^-(u) \\
\tilde{\eta} &=& \psi^+(u) + \eta g^+(u) \\
\tilde{\alpha} &=& \psi^-(u) + \alpha g^-(u) -2 \eta \alpha (\psi^-)'(u) .
\end{eqnarray}

Conversely, define the map
\begin{eqnarray}
\mathcal{F}_2 : \mathcal{SA}_{*>1}(1, U_B) & \longrightarrow & \mathcal{SC}_{*>1}(2, U_B)\\
H & \mapsto & \mathcal{F}_2(H)  \nonumber
\end{eqnarray}
as follows: For $H \in \mathcal{SA}_{*>1}(1, U_B)$, then $H(z, \theta) = (f_1(z) + \theta \xi(z), \ \psi(z) + \theta g(z))$ for even functions $f_1(z)$ and $g(z)$ and odd functions $\xi(z)$ and $\psi(z)$, and with $g(z)$ nonvanishing.  Define $\mathcal{F}_2(H) (z, \theta^+, \theta^-) =  (\tilde{z}, \tilde{\theta}^+, \tilde{\theta}^-)$ to be of the form (\ref{superconformal-condition1})-(\ref{superconformal-condition3}) where
\begin{eqnarray}
f(z) &=& f_1(z) - \frac{\psi(z)\xi(z)}{2 g(z)}, \\
g^+(z) &=& g(z), \qquad \ \ \mbox{and} \qquad \ g^-(z)\ \  = \ \ \frac{f_1'(z)}{g(z)} - \frac{\psi'(z) \xi(z)}{g(z)^2}\\
\psi^+(z) &=& \psi(z), \qquad\ \  \mbox{and} \qquad \psi^-(z) \ \ = \ \ \frac{\xi(z)}{2g(z)}.
\end{eqnarray}
One can easily check that condition (\ref{superconformal-condition4}) is satisfied, and thus $\mathcal{F}_2(H)$ is indeed N=2 superconformal. 

We have that $\mathcal{F}_1$ and $\mathcal{F}_2$ are bijections and 
\begin{equation}\label{inverse-functors}
\mathcal{F}_1 \circ \mathcal{F}_2 = id_{\mathcal{SA}_{*>1}(1, U_B)} \qquad  \mbox{and} \qquad \mathcal{F}_2 \circ \mathcal{F}_1 = id_{\mathcal{SC}_{*>1}(2, U_B)}.
\end{equation}

Let $\mathcal{SCM}_{*>1}(2)$ be the category of N=2 superconformal super-Riemann surfaces over the Grassmann algebra $\bigwedge_{*>1}$, and let $\mathcal{SAM}_{*>1}(1)$ be the category of N=1 superanalytic super-Riemann surfaces $M$ over the Grassmann algebra $\bigwedge_{*>1}$ such that the transition functions for $M$ are in $\mathcal{SA}_{*>1}(1, U_B)$ for some $U_B \in \mathbb{C}$. 

Define the functor 
\begin{eqnarray}\label{category-iso}
\mathcal{F}: \mathcal{SCM}_{*>1}(2) &\longrightarrow& \mathcal{SAM}_{*>1}(1)\\
M & \mapsto & \mathcal{F}(M) \nonumber
\end{eqnarray}
as follows:  Let $M$ be an N=2 superconformal super-Riemann surface over the Grassmann algebra $\bigwedge_{*>1}$ with coordinate atlas $\{ (U_\alpha, \Omega_\alpha)\}_{\alpha \in A}$.   Let $\mathcal{F}(M)$ be the N=1 superanalytic  super-Riemann surface with body $M_B$ obtained by patching together  DeWitt open domains in $\bigwedge_{*>1}$ with local coordinates $(z, \theta)$ by means of the transition functions $\mathcal{F}_1 ( \Omega_\alpha \circ \Omega_\beta^{-1}):  (\Omega_\beta(U_\alpha \cap U_\beta))_B \times (\bigwedge_{*>1})_S \longrightarrow (\Omega_\alpha (U_\alpha \cap U_\beta))_B \times (\bigwedge_{*>1})_S$.

From (\ref{inverse-functors}), it follows that $\mathcal{F}$ is an isomorphism of categories.  Thus we have the following proposition (cf. \cite{DRS}):
\begin{prop}\label{DRS-prop}
The category $\mathcal{SCM}_{*>1}(2)$  of N=2 superconformal super-Riemann surfaces over the Grassmann algebra $\bigwedge_{*>1}$ is isomorphic to the category $\mathcal{SAM}_{*>1}(1)$ of N=1 superanalytic super-Riemann surfaces such that  the coefficients of the coordinate transition functions are restricted to lie in $\bigwedge_{*-2}$.
\end{prop}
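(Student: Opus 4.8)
The plan is to upgrade the local statement (\ref{inverse-functors}) --- that $\mathcal{F}_1$ and $\mathcal{F}_2$ are mutually inverse bijections between invertible N=2 superconformal and (suitably restricted) N=1 superanalytic local functions --- to a statement about atlases, transition functions, and the maps between them. The crucial first step is to check that $\mathcal{F}_1$ intertwines composition: for composable N=2 superconformal functions $H, K$ one has $\mathcal{F}_1(H \circ K) = \mathcal{F}_1(H) \circ \mathcal{F}_1(K)$ wherever the composition is defined. The clean way to see this is to exploit the description already given in the excerpt, namely that $\mathcal{F}_1(H)$ is obtained by conjugating $H$ by the fixed N=2 superanalytic change of variables $T : (z,\theta^+,\theta^-) \mapsto (z+\theta^+\theta^-,\theta^+,\theta^-)$ and then observing that the first two components $(\tilde{u}, \tilde{\eta})$ of $T \circ H \circ T^{-1}$ do not depend on the third variable $\alpha = \theta^-$. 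Conjugation is manifestly compatible with composition, $T \circ (H \circ K) \circ T^{-1} = (T \circ H \circ T^{-1}) \circ (T \circ K \circ T^{-1})$, and since the $(u,\eta)$-components of each conjugated factor are $\alpha$-independent, the $(u,\eta)$-components of the composite are the composite of the $(u,\eta)$-components; this is precisely $\mathcal{F}_1(H\circ K) = \mathcal{F}_1(H) \circ \mathcal{F}_1(K)$. The same argument (or (\ref{inverse-functors})) gives the analogous law for $\mathcal{F}_2$. Along the way one records that $\mathcal{F}_1$ sends identities to identities, is compatible with restriction of DeWitt open sets, and preserves bodies, since from (\ref{N=2-to-N=1}) the body of $\mathcal{F}_1(H)$ is the same analytic map $z_B \mapsto f_B(z_B)$ as the body of $H$.

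Next I would verify that $\mathcal{F}$ is well defined on objects. Given $M \in \mathcal{SCM}_{*>1}(2)$ with atlas $\{(U_\alpha,\Omega_\alpha)\}_{\alpha\in A}$, the proposed transition functions $\mathcal{F}_1(\Omega_\alpha \circ \Omega_\beta^{-1})$ lie in the appropriate $\mathcal{SA}_{*>1}(1, (\Omega_\beta(U_\alpha \cap U_\beta))_B)$ by construction of $\mathcal{F}_1$ --- in particular their coefficient functions are polynomial expressions in those of $f, g^\pm, \psi^\pm$, hence lie in $\bigwedge_{*-2}$ --- and by the composition law just established they satisfy the cocycle condition on triple overlaps together with the inverse law on double overlaps. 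Therefore they patch the DeWitt domains $(\Omega_\alpha(U_\alpha))_B \times (\bigwedge_{*>1})_S$ into a well-defined DeWitt $(1,1)$-superanalytic supermanifold, which by body-preservation of $\mathcal{F}_1$ has body $M_B$ and hence the same genus as $M$; thus $\mathcal{F}(M) \in \mathcal{SAM}_{*>1}(1)$. (Here one may also invoke Remark \ref{functorial-remark2} for the bookkeeping that the patched object is genuinely a supermanifold over $\bigwedge_{*>1}$.)

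Then I would check functoriality on morphisms. If $F : M_1 \to M_2$ is N=2 superconformal, then each local representative $\Xi_\beta \circ F \circ \Omega_\alpha^{-1}$ is an N=2 superconformal function --- and the formula (\ref{N=2-to-N=1}) for $\mathcal{F}_1$ makes sense for any such function, invertible or not --- so applying $\mathcal{F}_1$ produces N=1 superanalytic maps; using the composition law for $\mathcal{F}_1$ and the fact that the local representatives of $F$ transform under the transition functions of $M_1$ and $M_2$, these images agree on overlaps and glue to a well-defined N=1 superanalytic map $\mathcal{F}(F) : \mathcal{F}(M_1) \to \mathcal{F}(M_2)$, with compatibility with composition of morphisms and with identities again coming from the composition law for $\mathcal{F}_1$. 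Running the identical construction with $\mathcal{F}_2$ in place of $\mathcal{F}_1$ yields a functor $\mathcal{G} : \mathcal{SAM}_{*>1}(1) \to \mathcal{SCM}_{*>1}(2)$, and since $\mathcal{F}_1 \circ \mathcal{F}_2 = id$ and $\mathcal{F}_2 \circ \mathcal{F}_1 = id$ at the level of local functions by (\ref{inverse-functors}), the induced functors satisfy $\mathcal{F} \circ \mathcal{G} = id$ and $\mathcal{G} \circ \mathcal{F} = id$ on objects and morphisms, so $\mathcal{F}$ is an isomorphism of categories, which is the assertion of the proposition. The main obstacle is the first step, proving that $\mathcal{F}_1$ (and hence $\mathcal{F}_2$) respects composition; everything afterward is routine atlas-and-cocycle bookkeeping, and the conjugation-by-$T$-then-forget-$\alpha$ description is exactly what turns that step into a one-line observation rather than a laborious substitution into (\ref{superconformal-condition1})--(\ref{superconformal-condition4}).
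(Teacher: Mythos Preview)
Your proposal is correct and follows the same route the paper indicates: deduce the categorical isomorphism from the local fact (\ref{inverse-functors}) that $\mathcal{F}_1$ and $\mathcal{F}_2$ are mutual inverses. The paper's own argument is a single sentence (``From (\ref{inverse-functors}), it follows that $\mathcal{F}$ is an isomorphism of categories''), so what you have written is a substantial expansion rather than a different method; in particular your conjugation-by-$T$ observation to establish $\mathcal{F}_1(H\circ K)=\mathcal{F}_1(H)\circ\mathcal{F}_1(K)$ supplies exactly the missing ingredient that the paper takes for granted, and the remaining cocycle and functoriality checks are, as you say, routine bookkeeping.
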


\begin{rema}
{\em  In N=2 superconformal field theory, the supermanifolds that arise from superstrings propagating through space-time, are N=2 superconformal super-Riemann surfaces with half-infinite tubes attached.  These half-infinite tubes are N=2 superconformally equivalent to punctures on the N=2 superconformal super-Riemann surface with N=2 superconformal local coordinates vanishing at the punctures.  Although, there is a bijection between N=2 superconformal local coordinates in a neighborhood of a given point on an N=2 superconformal super-Riemann surface $M$ and the N=1 superanalytic local coordinates in a neighborhood of the corresponding point on $\mathcal{F}(M)$, 
a bijective correspondence does not exist between such N=2 superconformal local coordinates vanishing at the point $p \in M$ and N=1 superanalytic local coordinates vanishing at the corresponding point in $\mathcal{F}(M)$.  For example, the N=1 superanalytic function $H(z, \theta) = ( z + \theta, \theta)$ vanishes at the origin $(0,0)$ of the N=1 superplane $\bigwedge_{*>1}$.  However the corresponding N=2 superconformal function $\mathcal{F}_1(H) (z, \theta^+, \theta^-) = (z + \frac{1}{2} \theta^+, \theta^+, \frac{1}{2} + \theta^-)$ does not vanish at the corresponding point in $\mathcal{F}^{-1} ( \bigwedge_{*>1}) = \bigwedge_{*>1}^0 \oplus (\bigwedge_{*>1}^1)^2$, that point being the origin.   Thus one cannot simply replace N=2 superconformal worldsheets swept out by propagating superstrings by N=1 superanalytic worldsheets when the full data of the propagating strings is included.  One must either work in the N=2 superconformal setting, or take into account the discrepancies that arise by using the N=1 superanalytic setting when modeling the incoming and outgoing tubes for the superstrings.  See, for example, \cite{B-axiomatic-deformations} for further discussion of this fact. }
\end{rema}

\section{The Uniformization Theorem for genus-zero N=2 superconformal and N=1 superanalytic super-Riemann surfaces}

For $n \in \mathbb{Z}$, define the N=2 superconformal map
\begin{eqnarray}\label{define-I}
I_n:   \mbox{$(\bigwedge_{*>1}^0)^\times$} \oplus (\mbox{$\bigwedge_{*>1}^1$})^2  & \longrightarrow &   \mbox{$(\bigwedge_{*>1}^0)^\times$} \oplus (\mbox{$\bigwedge_{*>1}^1$})^2 \\
(z, \theta^+, \theta^-) & \mapsto & I_n(z,\theta^+, \theta^-) =  \Bigl(\frac{1}{z}, \; \frac{i\theta^+z^n}{z}, \; \frac{i\theta^-z^{-n}}{z} \Bigr) . \nonumber
\end{eqnarray}

For $n \in \mathbb{Z}$, define $S^2\hat{\mathbb{C}}(n)$ to be the genus-zero N=2 superconformal super-Riemann surface over $\bigwedge_{*>1}$ with N=2 superconformal structure given by the covering of local coordinate neighborhoods $\{ U_{\sou_n}, U_{\nor_n} \}$ and the local coordinate maps
\begin{eqnarray}
\sou_n  : U_{\sou_n}  & \longrightarrow & \mbox{$\bigwedge_{*>1}^0$} \oplus (\mbox{$\bigwedge_{*>1}^1$})^2 \label{southern-chart} \\
\nor_n : U_{\nor_n} & \longrightarrow & \mbox{$\bigwedge_{*>1}^0$}  \oplus (\mbox{$\bigwedge_{*>1}^1$})^2, \label{northern-chart}
\end{eqnarray}
which are homeomorphisms of $U_{\sou_n}$ and $U_{\nor_n}$ onto $\bigwedge_{*>1}^0 \oplus (\bigwedge_{*>1}^1)^2$, respectively, such that 
\begin{eqnarray}\label{transition}
\sou_n \circ \nor_n^{-1} : \mbox{$(\bigwedge_{*>1}^0)^\times$} \oplus \mbox{$(\bigwedge_{*>1}^1)^2$} &\longrightarrow& \mbox{$(\bigwedge_{*>1}^0)^\times$} \oplus \mbox{$(\bigwedge_{*>1}^1)^2$} \label{origin-of-I}\\ 
(z, \theta^+,\theta^-) & \mapsto & I_n(z,\theta^+,\theta^-) . \nonumber
\end{eqnarray}
Thus the body of $S^2\hat{\mathbb{C}}(n)$ is the Riemann sphere, i.e., $(S^2\hat{\mathbb{C}}(n))_B = \hat{\mathbb{C}} = \mathbb{C} \cup \{\infty\}$.

The N=1 superanalytic super-Riemann surface $\mathcal{F}(S^2\hat{\mathbb{C}}(n))$, for $n \in \mathbb{Z}$, has body $\hat{\mathbb{C}}$, and transition function given by $\mathcal{F}_1 (I_n) (z, \theta) = (1/z, i\theta z^{n-1})$.  

In \cite{B-uniformization}, we proved the following Uniformization Theorem:

\begin{thm}\label{uniformization-thm} (\cite{B-uniformization}) Any N=2 superconformal super-Riemann surface with closed, genus-zero body is N=2 superconformally equivalent to $S^2\hat{\mathbb{C}} (n)$ for some $n \in \mathbb{Z}$.  Moreover, $S^2\hat{\mathbb{C}} (m)$ and  $S^2\hat{\mathbb{C}} (n)$ for $m,n \in \mathbb{Z}$ are not N=2 superconformally equivalent if $m \neq n$.  

Similarly, any N=1 superanalytic super-Riemann surface with closed, genus-zero body is N=1 superanalytically equivalent to $\mathcal{F}(S^2\hat{\mathbb{C}} (n))$ for some $n \in \mathbb{Z}$.  Moreover, $\mathcal{F}(S^2\hat{\mathbb{C}} (m))$ and  $\mathcal{F}(S^2\hat{\mathbb{C}} (n))$ for $m,n \in \mathbb{Z}$ are not N=1 superanalytically equivalent if $m \neq n$.  
\end{thm}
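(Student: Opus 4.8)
The plan is to reduce the classification to that of a single transition function modulo a natural gauge action, normalize that function, and then isolate a discrete invariant. By hypothesis $M_B$ is a closed genus-zero Riemann surface, hence $M_B \cong \hat{\mathbb{C}}$ with its standard two-chart atlas, each chart biholomorphic to $\mathbb{C}$. First I would cut the atlas of $M$ down to two DeWitt $(1,2)$-charts $(U_1,\Omega_1)$, $(U_2,\Omega_2)$ whose bodies are these two charts of $\hat{\mathbb{C}}$. This rests on a local triviality statement: an N=2 superconformal super-Riemann surface whose body is a Stein domain $D\subseteq\mathbb{C}$ (in particular $D=\mathbb{C}$) is N=2 superconformally equivalent to the DeWitt $(1,2)$-superplane piece over $D$, because the relevant sheaf of N=2 superconformal vector fields is filtered by soul- and Grassmann-degree into coherent graded pieces, each with vanishing $H^1$ over the Stein $D$, so a trivialization can be built order by order. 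Restricting $M$ over the two body charts and trivializing each, $M$ is recovered by gluing two copies of $\bigwedge_{*>1}^0\oplus(\bigwedge_{*>1}^1)^2$ across the overlap, whose body is $\mathbb{C}^\times$, by a single invertible N=2 superconformal function $H=\Omega_1\circ\Omega_2^{-1}$.

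Next I would bring $H$ into normal form, using as gauge the pre- and post-composition with N=2 superconformal automorphisms of the superplane, whose body parts are affine automorphisms of $\mathbb{C}$. Writing $H$ in the canonical form (\ref{superconformal-condition1})--(\ref{superconformal-condition4}) with data $f,g^\pm,\psi^\pm$, each a $z$-dependent superfunction on the annular overlap carrying Laurent-type expansions, I would proceed in layers. At the body level, $f_B$ is a biholomorphism between twice-punctured spheres which, for the two charts to cover $\hat{\mathbb{C}}$, must carry the body puncture of one chart to the point filled in by the other; after using affine automorphisms to move both punctures to the origin this forces $f_B(z)=a/z$, and a scaling reduces it to $f_B(z)=1/z$. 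Then $g^+_B$ is a nowhere-vanishing holomorphic function on $\mathbb{C}^\times$, so $g^+_B(z)=z^{m}e^{h_+(z)+h_-(1/z)}$ for a unique $m\in\mathbb{Z}$ and some entire $h_\pm$; the factors $e^{h_+(z)}$ and $e^{h_-(1/z)}$ are precisely the $g^+$-data of automorphisms of the two planes and so can be absorbed, leaving $g^+_B(z)=z^m$, which a constant rescaling turns into $iz^m$. Setting $n=m+1$ matches the form $g^+(z)=iz^{n-1}$ of $I_n$ in (\ref{define-I}), and the constraint (\ref{superconformal-condition4}) with $f_B(z)=1/z$ then forces $g^-_B(z)=iz^{-n-1}$. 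The winding number $m$ is not further removable, since a pure power $z^m$ is never the $g^+$-datum of a plane automorphism (that datum must be nowhere-vanishing on all of $\mathbb{C}$): this is exactly the integer that survives. Finally, the soul parts of $f$ and $g^\pm$ together with all of $\psi^\pm$ are nilpotent and are removed order by order, at each step writing a cocycle on the annular overlap as a difference of data extending over the two chart bodies; after the body-level normalization the residual obstruction vanishes, and $H$ is carried onto $I_n$. This gives the first assertion.

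For the distinctness I would exhibit $n$ as a genuine invariant. An N=2 superconformal equivalence $F:S^2\hat{\mathbb{C}}(m)\to S^2\hat{\mathbb{C}}(n)$ induces a biholomorphism $F_B$ of $\hat{\mathbb{C}}$; since $F$ transforms $D^+$ into a nonzero superfunction times $\tilde D^+$ and $D^-$ into a nonzero superfunction times $\tilde D^-$ (Section \ref{superconformal-section}), it respects the canonical splitting of the odd normal bundle of the body and identifies the $\theta^+$-line bundle $L^+$ of $S^2\hat{\mathbb{C}}(m)$ with the pullback under $F_B$ of that of $S^2\hat{\mathbb{C}}(n)$. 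From $I_n$ one reads off that $L^+$ has transition cocycle $iz^{n-1}$, of winding number $n-1$ about each puncture, so $\deg L^+$ is an injective (indeed affine) function of $n$; since a biholomorphism of $\hat{\mathbb{C}}$ has degree one and preserves degrees of line bundles, $m=n$. The N=1 superanalytic half of the theorem follows either by running the same argument in the $(1,1)$ setting — which is in fact easier, every $(1,1)$ supermanifold being split, so that the normal forms are just the transition functions $(1/z,i\theta z^{n-1})$ and distinct $n$ give non-isomorphic odd line bundles — or directly from the N=2 statement via the category isomorphism of Proposition \ref{DRS-prop}, under which $S^2\hat{\mathbb{C}}(n)$ corresponds to $\mathcal{F}(S^2\hat{\mathbb{C}}(n))$.

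The main obstacle I anticipate is not the invariant but the normalization machinery: establishing the local triviality over $\mathbb{C}$ that cuts the atlas down to two charts, and then carrying out the order-by-order removal of the soul of the transition function, verifying at each step that the Cousin-type splitting obstruction on the annular overlap is killed once the finitely many non-removable Laurent modes have been matched to the body-level gauge fixing, and that invertibility is preserved throughout. This cohomological bookkeeping, together with the exponential factorization of $g^+_B$, is the technical heart of the proof, whereas extracting $n$ as a line-bundle degree is comparatively soft.
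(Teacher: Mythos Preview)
The paper does not give a proof of this theorem; it is quoted from \cite{B-uniformization}. The only hints in the present paper as to the method are Remark~\ref{H-manifolds-remark}, which says that the obstruction to reducing to an $\mathcal{H}$-manifold lies in the first \v Cech cohomology of $M_B$ with coefficients in the sheaf of holomorphic vector fields, and Remark~\ref{line-bundle-remark}, which identifies the integer $n$ with a shift of the degree of the holomorphic line bundle cut out by the $\theta^+$-component. Your outline is entirely compatible with both hints: your order-by-order removal of the soul of the transition function over the annular overlap is exactly a \v Cech-cohomological argument with values in (graded pieces of) the N=2 superconformal vector fields, and your distinctness argument is precisely the line-bundle degree invariant of Remark~\ref{line-bundle-remark}.

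On the substance, your sketch is sound. A couple of points worth tightening. First, you can shortcut the body-level normalization of $f_B$: since $M_B\cong\hat{\mathbb{C}}$ you may simply take the two body charts to be the standard ones with transition $z\mapsto 1/z$, and then the only residual freedom in trivializing each super-chart is an N=2 superconformal automorphism of the superplane over $\mathbb{C}$, whose body part is affine; this immediately forces $f_B(z)=a/z$ without a separate argument about biholomorphisms of twice-punctured spheres. Second, in the invariant step, the reason $F$ respects the $\theta^+/\theta^-$ splitting is exactly the definition in Section~\ref{superconformal-section}: an N=2 superconformal map sends $D^+$ to a multiple of $\tilde D^+$ and $D^-$ to a multiple of $\tilde D^-$, with no cross term and no possibility of swapping, so the two odd line bundles are individually functorial. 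With that granted, your degree computation from the cocycle $iz^{n-1}$ pins down $n$ and matches the paper's Remark~\ref{line-bundle-remark} verbatim.

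The genuine labor, as you correctly flag, is the local triviality over $\mathbb{C}$ and the inductive splitting on $\mathbb{C}^\times$. For the former, the filtration you describe by soul/Grassmann degree yields successive extension problems governed by $H^1$ of $\mathbb{C}$ with coefficients in $\mathcal{O}$-modules, which vanish; for the latter, after the body normalization the graded pieces of the cocycle on $\mathbb{C}^\times$ are Laurent data, and the Cousin splitting into polar parts at $0$ and at $\infty$ is exactly what absorbs them into automorphisms of the two superplanes. That this terminates and preserves invertibility is automatic from nilpotence of the soul. None of this is wrong, but it is where the write-up would need the most care.
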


\begin{rema}\label{line-bundle-remark}
{\em As discussed in \cite{B-uniformization}, this Uniformization Theorem for genus-zero N=2 superconformal (resp. N=1 superanalytic) super-Riemann surfaces, Theorem \ref{uniformization-thm},  can be restated as follows:  There is a bijection between N=2 superconformal (resp. N=1 superanalytic) equivalence classes of N=2 superconformal (resp. N=1 superanalytic) super-Riemann surfaces with closed, genus-zero body and holomorphic equivalence classes of holomorphic line bundles over the Riemann sphere.  One can see this bijective correspondence explicitly, by noting that the N=2 superconformal super-Riemann sphere $S^2 \hat{\mathbb{C}}(n)$ for $n \in \mathbb{Z}$ has, as a substructure, the $GL(1, \mathbb{C})$-bundle over $\hat{\mathbb{C}}$ given by the transition function $iz_{(\emptyset)}^{n-1} : \mathbb{C}^\times \longrightarrow \mathbb{C}^\times$, corresponding to the transition function for the first fermionic component of $S^2\hat{\mathbb{C}}(n)$ restricted to the fiber in the first component of $\theta^+ = \theta_{(1)}^+ \zeta_1 + \theta_{(2)}^+ \zeta_2 + \cdots$.  (Or equivalently, one can restrict to the $(j)$-th component for $(j) \in J^1_{*>1}$.)  Moreover, the $GL(1, \mathbb{C})$-bundle over $\hat{\mathbb{C}}$ with transition function $iz_{(\emptyset)}^{n-1} : \mathbb{C}^\times \longrightarrow \mathbb{C}^\times$, for $n \in \mathbb{Z}$, picks out a unique $S^2 \hat{\mathbb{C}}(n)$.    Under this bijection between equivalence classes of genus-zero  N=2 superconformal super-Riemann surfaces and equivalence classes of holomorphic line bundles over the body, the N=2 superconformal super-Riemann surface $S^2\hat{\mathbb{C}} (n)$ corresponds to the holomorphic line bundle over $\hat{\mathbb{C}}$ of degree $-n+1$.    }
\end{rema}

\begin{rema}\label{H-manifolds-remark} {\em
In particular, in the genus-zero case, any N=2 superconformal (resp. N=1 superanalytic) super-Riemann surface is N=2 superconformally (resp. N=1 superanalytically) equivalent to an $\mathcal{H}$-manifold, and thus the classification of such supermanifolds reduces to that found in the ringed-space approach as in \cite{Manin1}.  This is not the case for genus greater than one.  As shown in \cite{B-uniformization}, whether or not a general N=2 superconformal (resp. N=1 superanalytic) super-Riemann surface $M$  is  N=2 superconformally (resp. N=1 superanalytically) equivalent to an $\mathcal{H}$-manifold is dependent on whether the first \v Cech cohomology group of the underlying body Riemann surface $M_B$ with coefficients in the sheaf of holomorphic vector fields over the body is trivial.   }
\end{rema}

Throughout the remainder of this paper, we will use the setting of N=2 superconformal super-Riemann spheres and their automorphism groups.  However, using Proposition \ref{DRS-prop}, the category isomorphism (\ref{category-iso}), and the map (\ref{N=2-to-N=1}), our results can easily be translated to the N=1 superanalytic setting. 

\section{The automorphism groups of the N=2 superconformal super-Riemann spheres}

Let  $\mathrm{Aut}( S^2\hat{\mathbb{C}} (n))$ denote the group of N=2 superconformal automorphisms of $S^2\hat{\mathbb{C}} (n)$, for $n \in \mathbb{Z}$.  For $T \in \mathrm{Aut}( S^2\hat{\mathbb{C}} (n))$, define the N=2 superconformal function $T_\sou = \sou_n \circ T \circ \sou_n^{-1}$ where $(U_{\sou_n}, \sou_n)$ is the coordinate chart (\ref{southern-chart}) defining $S^2\hat{\mathbb{C}}(n)$.   Then by (\ref{superconformal-condition1}) and (\ref{superconformal-condition3}), $T_\sou$ is uniquely determined by three even superfunctions of one even variable $f(z)$ and $g^\pm(z)$, and two odd superfunctions of one even variable $\psi^\pm(z)$ satisfying the condition (\ref{superconformal-condition4}).  We will call these 5 functions the {\it component} functions of $T_\sou$.  

\begin{thm}\label{auto-groups-thm} 
If $T \in \mathrm{Aut}(S^2\hat{\mathbb{C}}(n))$, then $T$ is uniquely determined by $T_\sou = \sou_n \circ T \circ \sou_n^{-1}$ with the $T_\sou$ determined by component functions as follows:
\begin{equation}\label{f}
f(z) = \frac{az + b}{cz + d} \qquad \mbox{for $a,b,c,d \in \bigwedge_{*-2}^0$ and $ad-bc = 1$.}
\end{equation}
If $n=0$, we have
\begin{equation}\label{psi-zero}
\psi^\pm(z)  = \frac{\psi^\pm_1z + \psi^\pm_0}{cz+d} \qquad \mbox{for $\psi^\pm_j \in \bigwedge^1_{*-2}$ with $j = 0,1$},
\end{equation}
and
\begin{equation}
g^\pm (z) = \frac{\epsilon^\pm}{cz+d} + \frac{f^\pm z + h^\pm}{(cz+d)^2}
\end{equation}
for $\epsilon^\pm \in  (\bigwedge^0_{*-2})^\times$, $f^\pm, h^\pm \in \bigwedge^0_{*-2}$ satisfying
\begin{eqnarray}
\epsilon^+ \epsilon^- &=& 1 - \psi_1^+ \psi^-_0 - \psi^-_1 \psi^+_0\\
f^\pm &=& \mp \epsilon^\pm \psi^+_1 \psi^-_1 d\\
h^\pm &=& \pm \epsilon^\pm ( \psi^+_0 \psi^-_0 c - (\psi^+_1 \psi^-_0 - \psi^-_1 \psi^+_0)d \mp \psi^+_1 \psi^-_1 \psi^+_0 \psi^-_0 d).
\end{eqnarray}

If $n = 1$, we have
\begin{eqnarray}
\psi^+(z) & =& \psi^+_0 \label{psi-+1+}\\
\psi^-(z) &=& \frac{\psi^-_2 z^2 + \psi^-_1z + \psi^-_0}{(cz+d)^2} \label{psi-+1-}\\
g^+ (z) &=& \epsilon \\
g^-(z) &=& \frac{1}{\epsilon (cz+d)^2} + \frac{\psi^+_0(2 \psi^-_2 dz - \psi^-_1 (cz-d) - 2\psi^-_0c)}{\epsilon (cz+d)^3}, \label{g-+1-}
\end{eqnarray}
for $\psi^+_0, \psi^-_j \in \bigwedge^1_{*-2}$ with $j = 0,1,2$, and $\epsilon \in (\bigwedge^0_{*-2})^\times$.

If $n = -1$, we have
\begin{eqnarray}
\psi^+(z) &=& \frac{\psi^+_2 z^2 + \psi^+_1z + \psi^+_0}{(cz+d)^2} \label{psi--1+} \\
\psi^-(z) & =& \psi^-_0 \label{psi--1-}\\
g^+(z) &=& \frac{1}{\epsilon (cz+d)^2} - \frac{(2 \psi^+_2 dz - \psi^+_1 (cz-d) - 2\psi^+_0c)\psi^-_0}{\epsilon (cz+d)^3},\\
g^- (z) &=& \epsilon ,
\end{eqnarray}
for $\psi^+_j, \psi^-_0\in \bigwedge^1_{*-2}$ with $j = 0,1,2$, and $\epsilon \in (\bigwedge^0_{*-2})^\times$.

If $n\geq 2$, we have
\begin{eqnarray}
\psi^+(z) & =& 0 \label{psi-positive-zero}\\
\psi^-(z) &=& \frac{\psi^-_{n+1} z^{n+1} + \psi^-_nz^n + \cdots  + \psi^-_1z + \psi^-_0}{(cz+d)^{n+1}} \label{psi-positive}\\
g^\pm (z) &=& \epsilon^{\pm1} (cz+d)^{\pm n-1} , \label{g-positive}
\end{eqnarray}
for $\psi^-_j \in \bigwedge^1_{*-2}$ with $j = 0,1,\dots, n+1$, and $\epsilon \in (\bigwedge^0_{*-2})^\times$.

If $n\leq -2$, we have
\begin{eqnarray}
\psi^+(z) &=& \frac{\psi^+_{-n+1} z^{-n+1} + \psi^+_{-n}z^{-n} + \cdots  + \psi^+_1z + \psi^+_0}{(cz+d)^{-n+1}} \label{psi-negative}\\
\psi^-(z) & =& 0  \label{psi-negative-zero}\\
g^\pm (z) &=& \epsilon^{\mp1} (cz+d)^{\pm n-1} , \label{g-negative}
\end{eqnarray}
for $\psi^-_j \in \bigwedge^1_{*-2}$ with $j = 0,1,\dots, n+1$, and $\epsilon \in (\bigwedge^0_{*-2})^\times$.
\end{thm}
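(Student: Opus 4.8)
# Proof Proposal

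The plan is to work entirely in the southern chart and translate the condition that $T$ extends to a \emph{global} N=2 superconformal automorphism of $S^2\hat{\mathbb{C}}(n)$ into a finite system of constraints on the component functions $f$, $g^\pm$, $\psi^\pm$. First I would observe that $T_\sou = \sou_n \circ T \circ \sou_n^{-1}$ is an N=2 superconformal \emph{self-map} of the DeWitt superplane $\bigwedge_{*>1}^0 \oplus (\bigwedge_{*>1}^1)^2$, so by (\ref{superconformal-condition1})--(\ref{superconformal-condition4}) it is determined by $f, g^\pm, \psi^\pm$ subject to (\ref{superconformal-condition4}); a priori these are only required to be superanalytic on all of $\mathbb{C}$ (entire in the body variable). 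The automorphism condition forces $T_\nor := \nor_n \circ T \circ \nor_n^{-1}$ to \emph{also} be such a self-map, and the compatibility relation is
\begin{equation}\label{compat}
T_\nor = I_n \circ T_\sou \circ I_n^{-1},
\end{equation}
which must be N=2 superconformal and regular (superanalytic with invertible $D^\pm\tilde\theta^\pm$) at the north pole, i.e.\ near $z=\infty$ in the $\sou_n$ coordinate. So the strategy is: impose regularity of $T_\sou$ on $\mathbb{C}$ \emph{and} regularity of $I_n \circ T_\sou \circ I_n^{-1}$ at $\infty$, and read off exactly which $(f,g^\pm,\psi^\pm)$ survive.

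The key steps, in order. \textbf{(1)} Reduce to the body: projecting (\ref{superconformal-condition1}) to bodies shows the body map is a Möbius automorphism of $\hat{\mathbb{C}}$, hence $f_B(z) = (a_Bz+b_B)/(c_Bz+d_B)$; then lift this to the superalgebra level. The cleanest route is to first treat the case $\psi^\pm = 0$, $g^+g^- = f'$ (the "reduced" automorphisms), show these are classified by an $SL(2)$-action giving (\ref{f}) and the purely-even parts of $g^\pm$, and then handle the odd deformations. \textbf{(2)} Conjugate by $I_n$: compute $I_n \circ T_\sou \circ I_n^{-1}$ explicitly using the transformation rules (\ref{superconformal-condition1})--(\ref{superconformal-condition4}) for composition of N=2 superconformal maps — this is where the $z^{\pm n}$ twisting enters and is responsible for the asymmetry between $\psi^+$ and $\psi^-$ and for the $n$-dependence of the allowed polynomial degrees. \textbf{(3)} Impose entirety at both poles: $f$ and $g^\pm$ must be rational with poles only where $cz+d$ vanishes, with the orders dictated by how $I_n$ rescales each component; $\psi^+$ and $\psi^-$ acquire factors $z^{\pm n}$ upon conjugation, so requiring $\psi^\pm$ entire near $0$ and $I_n$-conjugated-$\psi^\pm$ entire near $\infty$ forces one of them to vanish (for $|n|\ge 2$) or to be a polynomial of degree $\le |n|+1$ over $(cz+d)^{|n|+1}$. \textbf{(4)} Feed everything back into the superconformal constraint (\ref{superconformal-condition4}), $f' = (\psi^+)'\psi^- - \psi^+(\psi^-)' + g^+g^-$, which with $ad-bc=1$ pins down $\epsilon^\pm$, $f^\pm$, $h^\pm$ (in the $n=0$ case) and the cross-terms in $g^-$ or $g^+$ (in the $n=\pm1$ cases), and shows consistency for $|n|\ge 2$. \textbf{(5)} Conversely verify each listed family genuinely defines an automorphism by checking regularity at both poles and invertibility — this direction is routine given the computations in (2)--(4).

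The main obstacle I expect is step (2)/(3): carrying out the conjugation $I_n \circ T_\sou \circ I_n^{-1}$ correctly as N=2 superconformal maps (not just as naive substitutions) and extracting the precise pole-order bookkeeping. Because $I_n$ twists $\theta^\pm$ by $z^{\pm n}/z$, the odd component functions $\psi^\pm$ transform with weight shifts that depend on $n$, and one must be careful that the composition formulas (which mix $f$, $g^\pm$, $\psi^\pm$ nonlinearly) are applied consistently; a sign or weight error here would produce the wrong polynomial degrees. A secondary subtlety is verifying that the finitely many remaining soul-valued parameters satisfying (\ref{superconformal-condition4}) actually correspond to the $SL(2)$-normalization $ad-bc=1$ rather than to a larger $GL(2)$ — i.e.\ that the residual even-soul scaling is absorbed into $\epsilon^\pm$. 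Everything else (the body reduction, the final verification that the families close under composition) is bookkeeping, and much of the $n=0$ case is already available from \cite{B-n2moduli}, so the new content is genuinely the $n \neq 0$ twisting analysis.
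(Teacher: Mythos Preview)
Your outline is essentially the paper's own argument: reduce to the body to get the M\"obius form of $f$, compute the component functions of $T_\nor$ from those of $T_\sou$ via the transition map $I_n$, impose regularity at the second pole to constrain $\psi^\pm$ and (one of) $g^\pm$, and then use the superconformal constraint (\ref{superconformal-condition4}) to pin down the rest, citing \cite{B-n2moduli} for $n=0$. One small caution: since $\sou_n\circ\nor_n^{-1}=I_n$, the correct compatibility is $T_\nor = I_n^{-1}\circ T_\sou\circ I_n$ (not $I_n\circ T_\sou\circ I_n^{-1}$), and $T_\sou$ is not a priori a self-map of the whole superplane but may have a pole at $z_B=-d_B/c_B$; both points are exactly the ``bookkeeping'' you flagged as delicate, and the paper's explicit formulas for $\tilde f,\tilde\psi^\pm,\tilde g^\pm$ in terms of $f,\psi^\pm,g^\pm$ are what make step~(3) go through.
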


\begin{proof}
For any $T \in \mathrm{Aut} (S^2\hat{\mathbb{C}}(n))$, we have that $T$ restricted to the body of $S^2 \hat{\mathbb{C}}(n)$ is an automorphism of the Riemann sphere.  Thus  
\begin{eqnarray}\label{T-body}
(T_\sou)_B(z, \theta^+, \theta^-) &=& \pi_B^{(1,2)} \circ T_\sou (z, \theta^+, \theta^-) 
= f_B(z_B) = \pi_B^{(1,0)} \circ f(z)\\
& =&  \frac{a_Bz_B + b_B}{c_B z_B + d_B} \nonumber
\end{eqnarray} 
for $a_B, b_B, c_B, d_B \in \mathbb{C}$ satisfying $a_B d_B - b_B c_B = 1$.   Therefore, the only possible singularity for $T_\sou$ is at $z_B = -d_B/c_B$.   In addition, defining $T_\nor = \nor_n \circ T \circ \nor_n^{-1}$; that is, $T_\nor = I_n^{-1} \circ T_\sou \circ I_n$, then we have that the only possible singularity for $T_\nor$ is at $z_B = -a_B/b_B$.   

Let $\tilde{f}(z)$, $\tilde{g}^\pm(z)$, and $\tilde{\psi}^\pm$ be the three even and two odd component functions that determine $T_\nor(z, \theta^+, \theta^-)$.  Then in terms of the component functions $f$, $g^\pm$ and $\psi^\pm$ that determine $T_\sou$, we have
\begin{eqnarray}
\tilde{f}(z) \! \! \! &=& \! \! \! \frac{1}{f(1/z)}\\
\tilde{\psi}^\pm(z) \! \! \! &=&\! \! \!  - i \psi^\pm(1/z) (f(1/z))^{\pm n - 1} \label{psi-tilde} \\
\qquad \ \ \ \tilde{g}^\pm (z) \! \! \! &=& \! \! \! z^{\pm n -1} g^\pm (1/z) (f(1/z))^{\pm n - 2} \left( f(1/z)   - (n \mp 1) \psi^+(1/z) \psi^-(1/z) \right). \label{g-tilde}
\end{eqnarray}

Expanding $f(z)$ about the pole at $z_B = -d_B/c_B$ and taking into consideration that $\tilde{f}(z)$ can only have poles at $z_B = -a_B/b_B$, and that $f_B(z_B)$ is given by (\ref{T-body}), we have that there exists $a,b,c,d \in \bigwedge_{*-2}^0$ such that 
\begin{equation}
f(z) = \frac{az+b}{cz+d}
\end{equation}
where $a_B d_B - b_B c_B = 1$.  Furthermore, if $ad-bc \neq 1$, then we can normalize $a,b,c,d$ by dividing by $\sqrt{ad-bc}$, so that $f(z) = (a'z+ b')/(c'z + d')$ for $a', b', c', d' \in \bigwedge_{*-2}^0$ with $a'd'-b'c' = 1$.   This proves (\ref{f}).

Now, requiring that $\psi^\pm(z)$ only have poles at $z = -d/c$,  Eq. (\ref{psi-tilde}) and the restriction that $\tilde{\psi}^\pm$ only have poles at $z = -a/b$, implies that the $\psi^\pm$ must be of the form as stated in the theorem; that is, Eqs. (\ref{psi-zero}), (\ref{psi-+1+}), (\ref{psi-+1-}), (\ref{psi--1+}), (\ref{psi--1-}), (\ref{psi-positive-zero}), (\ref{psi-positive}), (\ref{psi-negative}),  and (\ref{psi-negative-zero}) hold. 

Let $n \geq 2$.  Requiring that $g^-(z)$ only have poles at $z = -d/c$,  Eq. (\ref{g-tilde}) and the restriction that $\tilde{g}^-(z)$ only have poles at $z = -a/b$, implies that $g^-(z) = \epsilon (cz+d)^{-n-1}$ for some constant $\epsilon \in \bigwedge_{*-2}^0$.  The N=2 superconformal condition (\ref{superconformal-condition4}) applied to $T_\sou$, then implies that $g^+(z) = (g^-(z))^{-1} (cz+d)^{-2}$ giving (\ref{g-positive}).  Eq. (\ref{g-negative}) is obtained analogously.

Now let $n = 1$.  Requiring that $g^+(z)$ only have poles at $z = -d/c$,  Eq. (\ref{g-tilde}) and the restriction that $\tilde{g}^+$ only have poles at $z = -a/b$, implies that $g^+(z) = \epsilon$ for some constant $\epsilon \in \bigwedge_{*-2}^0$.  The N=2 superconformal condition (\ref{superconformal-condition4}) applied to $T_\sou$, then implies that $g^-(z)$ must be of the form (\ref{g-+1-}).  Then one must check the condition that $\tilde{g}^-(z)$ can only have poles at $z = -a/b$, which is indeed satisfied.      The case for $n = -1$ is proved analogously.

The case $n = 0$ was proved in \cite{B-n2moduli}.

It remains to show that $T$ can be uniquely determined, by $T_\sou$.  We have 
\begin{multline}
T_\sou : \bigl(\mbox{$\bigwedge_{*>1}^0$} \smallsetminus \bigl(\{- d_B/c_B \} \times (\mbox{$\bigwedge_{*>1}^0$})_S \bigr) \bigr) \oplus\mbox{$(\bigwedge_{*>1}^1)^2$}  \longrightarrow \\
\bigl( \mbox{$\bigwedge_{*>1}^0$} \smallsetminus \bigl(\{a_B/c_B \} \times
(\mbox{$\bigwedge_{*>1}^0$})_S\bigr) \bigr) \oplus \mbox{$(\bigwedge_{*>1}^1)^2$},
\end{multline}
and
\begin{multline}
T_\nor: \bigl(\mbox{$\bigwedge_{*>1}^0$} \smallsetminus \bigl(\{-a_B/b_B \} \times
(\mbox{$\bigwedge_{*>1}^0$})_S \bigr)\bigr) \oplus \mbox{$(\bigwedge_{*>1}^1)^2$} \longrightarrow
\\
\bigl( \mbox{$\bigwedge_{*>1}^0$} \smallsetminus \bigr(\{d_B/b_B \} \times
(\mbox{$\bigwedge_{*>1}^0$})_S \bigr) \bigr) \oplus \mbox{$(\bigwedge_{*>1}^1)^2$},
\end{multline}
where $T_\nor = I_n^{-1} \circ T_\sou \circ I_n$ on the restricted domain.

Thus
\begin{equation}\label{T1}
T(p) = \left\{
  \begin{array}{ll} 
      \sou_n^{-1} \circ T_\sou \circ \sou_n (p) & \mbox{if $p \in U_{\sou_n} \smallsetminus X_1$}, \\  
      \nor_n^{-1} \circ T_\nor \circ \nor_n (p) & \mbox{if $p \in U_{\nor_n} \smallsetminus X_2$},
\end{array} \right.
\end{equation}
where $X_1 = \sou_n^{-1}( (\{- d_B/c_B \} \times (\bigwedge_{*>1}^0)_S ) \oplus (\bigwedge_{*>1}^1)^2 )$ and $X_2 = \nor_n^{-1}(( \{-a_B/b_B \} \times (\bigwedge_{*>1}^0)_S ) \oplus  (\bigwedge_{*>1}^1)^2 )$.
This defines $T$ for all $p \in S^2\hat{\mathbb{C}}(n)$ unless: 

(i) $a_B = 0$ and  $p \in \nor_n^{-1}( (\{0 \} \times (\bigwedge_{*>1}^0)_S) \oplus  \mbox{$(\bigwedge_{*>1}^1)^2$}
)$; or 

(ii) $d_B = 0$ and $p \in \sou_n^{-1}((\{0 \} \times (\bigwedge_{*>1}^0)_S) \oplus
\mbox{$(\bigwedge_{*>1}^1)^2$})$.

In case (i), we define $T (p)$ as follows:
Let $H(z, \theta^+, \theta^-) = T_\sou \circ I_n(z, \theta^+, \theta^-)$.  Then the domain of $H$ extends to $(z_S, \theta^+, \theta^-) \in (\{0\} \times (\bigwedge^0_*)_S \oplus (\bigwedge_*^1)^2)$.  Define $T(p) = \sou_n^{-1} \circ H(z, \theta^+ \theta^-)$, for $\nor_n (p) = (z,\theta^+, \theta^-) = (z_S, \theta^+, \theta^-)$.

In case (ii), we define $T(p)$ as follows:
Let $H(z, \theta^+, \theta^-) = I_n^{-1} \circ T_\nor (z, \theta^+, \theta^-)$.  Then the domain of $H$ extends to $(z_S, \theta^+, \theta^-) \in (\{0\} \times (\bigwedge^0_*)_S \oplus (\bigwedge_*^1)^2)$.  Define $T(p)  = \nor_n^{-1} \circ H(z, \theta^+, \theta^-)$,  for $\sou_n (p) = (z,\theta^+, \theta^-) = (z_S, \theta^+, \theta^-)$.

Thus $T$ is uniquely determined by $T_\sou$, i.e., by its value on $\sou_n (U_{\sou_n})$. 
\end{proof}

\section{The Lie superalgebras of infinitesimal global transformations of N=2 super-Riemann spheres}

The {\it N=2 Neveu-Schwarz algebra} is the Lie superalgebra with basis consisting of the central element $d$, even elements $L_n$ and $J_n$, and odd elements $G^\pm_{n + 1/2}$ for $n \in \mathbb{Z}$, and commutation relations  
\begin{eqnarray}
\left[L_m ,L_n \right] \! \! &=& \! \! (m - n)L_{m + n} + \frac{1}{12} (m^3 - m) \delta_{m + n 
, 0} \; d , \label{Virasoro-relation} \\
\left[ J_m, J_n \right] \! \! &=& \! \! \frac{1}{3} m \delta_{m+n,0} d, \qquad \qquad \qquad \qquad \quad \ \ \left [L_m, J_n \right] \  = \ -n J_{m+n}, \label{Virasoro-and-U(1)}\\
\left[ L_m, G^\pm_{n + \frac{1}{2}} \right] \! \! &=& \! \! \left(\frac{m}{2} - n - \frac{1}{2} \right) G^\pm_{m+n+\frac{1}{2}} ,\\
\left[ J_m, G^\pm_{n + \frac{1}{2}} \right] \! \! &=& \! \!  \pm G^\pm_{m+n+\frac{1}{2}} , \qquad \qquad \qquad \ \ \ \left[ G^\pm_{m + \frac{1}{2}} , G^\pm_{n + \frac{1}{2}} \right] \ =\  0 , \label{J-relation}\\
\qquad \left[ G^+_{m + \frac{1}{2}} , G^-_{n - \frac{1}{2}} \right]  \! \! &=& \! \! 2L_{m + n} + (m-n+1) J_{m+n}  + \frac{1}{3} (m^2 + m) \delta_{m + n , 0} \; d ,\label{Neveu-Schwarz-relation-last} 
\end{eqnarray}
for $m, n \in \mathbb{Z}$. 

Let $x$ denote a formal even variable, and $\varphi^+$ and $\varphi^-$ denote odd (i.e., anticommuting) variables.   Consider the superderivations in $\mbox{Der} (\bigwedge_*[[x,x^{-1}]] [\varphi^+,\varphi^-])$, given by 
\begin{eqnarray}
L_n(x,\varphi^+,\varphi^-) &=& - \biggl( x^{n + 1} \frac{\partial}{\partial x} + \Bigl(\frac{n + 1}{2}\Bigr) x^n \Bigl( \varphi^+ \frac{\partial}{\partial \varphi^+} + \varphi^- \frac{\partial}{\partial \varphi^-}\Bigr) \biggr) \label{L-notation}\\
J_n(x,\varphi^+,\varphi^-) &=& - x^n\Bigl(\varphi^+\frac{\partial}{\partial \varphi^+} - \varphi^- \frac{\partial}{\partial \varphi^-}\Bigr)  \label{J-notation}\\
\quad G^\pm_{n -\frac{1}{2}} (x,\varphi^+,\varphi^-) &=& - \biggl( x^n \Bigl( \frac{\partial}{\partial \varphi^\pm} - \varphi^\mp \frac{\partial}{\partial x}\Bigr) \pm nx^{n-1} \varphi^+ \varphi^- \frac{\partial}{\partial \varphi^\pm} \biggr) \label{G-notation}
\end{eqnarray}
for $n \in \mathbb{Z}$.  These give a representation of the N=2 Neveu-Schwarz algebra with central charge zero; that is  (\ref{L-notation})--(\ref{G-notation}) satisfy (\ref{Virasoro-relation})--(\ref{Neveu-Schwarz-relation-last}) with $d = 0$.  In addition, in \cite{B-n2moduli} it is shown that with this representation, this is the Lie superalgebra of infinitesimal N=2 superconformal transformations. 

Let $y$ be an even formal variable and $\xi$ an odd formal variable.  By direct expansion, we have that 
\begin{eqnarray}
e^{-yL_{-1}(x, \varphi^+, \varphi^-)} \cdot (x,\varphi^+,\varphi^-) &=&  (x + y, \, \varphi^+, \, \varphi^-)  \label{group-action1} \\
e^{-yL_0(x, \varphi^+, \varphi^-)} \cdot (x,\varphi^+,\varphi^-) &=&  (e^y x,\, e^{\frac{y}{2}}\varphi^+, \, e^{\frac{y}{2}} \varphi^-) \\
e^{-yJ_0(x, \varphi^+, \varphi^-)} \cdot (x,\varphi^+,\varphi^-) &=&  (x, \, e^y \varphi^+, \, e^{-y} \varphi^-) \\
\quad e^{-\xi G^+_{-1/2}(x, \varphi^+, \varphi^-)} \cdot (x,\varphi^+,\varphi^-) &=& (x + \varphi^-  \xi , \, \xi + \varphi^+, \, \varphi^-) \\ 
e^{-\xi G^-_{-1/2}(x, \varphi^+, \varphi^-)} \cdot (x,\varphi^+,\varphi^-)&=& (x + \varphi^+ \xi, \, \varphi^+, \, \xi + \varphi^-) ,
\end{eqnarray}
and for $k \in \mathbb{Z}_+$
\begin{eqnarray}
e^{-\xi G^+_{k-1/2}(x, \varphi^+, \varphi^-)} \cdot (x,\varphi^+,\varphi^-) &=& (x + \varphi^- \xi x^k, \, \xi x^k + \varphi^+ \\
& & \quad + \, k\varphi^+ \varphi^- \xi x^{k-1} ,  \, \varphi^-) \nonumber \\
e^{-\xi G^-_{k-1/2}(x, \varphi^+, \varphi^-)} \cdot (x,\varphi^+,\varphi^-) &=& (x + \varphi^+ \xi x^k, \, \varphi^+, \, \xi x^k +  \varphi^- \\
& & \quad - \, k\varphi^+ \varphi^- \xi x^{k-1}). \nonumber
\end{eqnarray}
For $n \in \mathbb{Z}$, we have 
\begin{eqnarray*}
\lefteqn{e^{-y(L_1(x, \varphi^+, \varphi^-) - n J_1(x, \varphi^+, \varphi^-)} \cdot (x,\varphi^+,\varphi^-)}\\
&=& \exp \left( y \left( x^2 \frac{\partial}{\partial x} + (1-n)x \varphi^+ \frac{\partial}{\partial \varphi^+}  + (1+n) x \varphi^- \frac{\partial}{\partial \varphi^-} \right) \right) \cdot (x, \varphi^+, \varphi^-)\\
&=& \left( \sum_{k \in \mathbb{N}} y^k x^{k+1} , \, \varphi^+ \sum_{k \in \mathbb{N}} \binom{n-1}{k} (-1)^k y^k x^k, \, \varphi^- \sum_{k \in \mathbb{N}}  \binom{-n-1}{k} (-1)^k y^kx^k \right)\\
&=&  \left(\frac{x}{1-yx},  \, \frac{\varphi^+}{(1-yx)^{-n+1}}, \, \frac{\varphi^-}{(1-yx)^{n+1}}\right).\\
\end{eqnarray*}
In addition, we note that
\begin{equation}\label{basis-shift}
e^{-y(L_0(x, \varphi^+, \varphi^-) - \frac{n}{2} J_0(x, \varphi^+, \varphi^-)} \cdot (x,\varphi^+,\varphi^-) =  (e^y x,\, e^{\frac{y(1-n)}{2}}\varphi^+, \, e^{\frac{y(1+n)}{2}} \varphi^-).
\end{equation}
Thus letting $a = d^{-1}= e^{y/2}$, Eq. (\ref{basis-shift}) gives $((a/d) x, \varphi^+ d^{n-1}, \varphi^- d^{-n-1})$, with $ad = 1$.

This along with Theorem \ref{auto-groups-thm} implies the following:

\begin{thm}\label{infinitesimals-thm}
For $n \in \mathbb{Z}$, let $\mathfrak{g}_n$ be the Lie superalgebra of infinitesimal automorphisms of $S^2\hat{\mathbb{C}}(n)$, i.e., $\mathfrak{g}_n = Lie(\mathrm{Aut}(S^2\hat{\mathbb{C}} (n)))$.  Then each $\mathfrak{g}_n$ is a subalgebra of the N=2 Neveu-Schwarz algebra, and these subalgebras are given as follows
\begin{eqnarray}
\mathfrak{g}_0 \! \! &=& \! \! \mathrm{span}_\mathbb{C} \left\{ L_{-1}, \, L_0,  \, L_1, \, J_0, \, G^+_{-1/2}, \, G^+_{1/2}, \, G^-_{-1/2}, \, G^-_{1/2}\right\}\\
\mathfrak{g}_1 \! \! &=& \! \! \mathrm{span}_\mathbb{C} \left\{L_{-1}, \,  L_0 - \frac{1}{2} J_0,  \, L_1-J_1, \, J_0, \, G^+_{-1/2}, \, G^-_{-1/2}, \, G^-_{1/2}, \, G^-_{3/2}\right\}\\
\qquad \ \mathfrak{g}_{-1} \! \! &=&\! \!  \mathrm{span}_\mathbb{C} \left\{ L_{-1}, \, L_0 + \frac{1}{2}J_0, \, L_1+J_1, \, J_0, \, G^+_{-1/2}, \, G^+_{1/2}, \, G^+_{3/2}, \, G^-_{-1/2}\right\},\\
\mathfrak{g}_n \! \! &=& \! \! \mathrm{span}_\mathbb{C} \left\{ L_{-1}, \, L_0 - \frac{n}{2}J_0 , \, L_1-nJ_1, \, J_0, \, G^-_{-1/2}, \, G^-_{1/2}, \dots, \right. \\
& & \hspace{2.75in} \left. G^-_{n-1/2}, \, G^-_{n+1/2}\right\}, \nonumber
\end{eqnarray}
for $n\geq 2$, and
\begin{multline}
\mathfrak{g}_n \ = \ \mathrm{span}_\mathbb{C} \left\{ L_{-1}, \, L_0 - \frac{n}{2} J_0, \, L_1-nJ_1, \, J_0, \, G^+_{-1/2}, \, G^+_{1/2}, \dots,\right.  \\
\left. G^+_{-n-1/2}, \, G^+_{-n+1/2}\right\},
\end{multline}
 for $n\leq -2$.
\end{thm}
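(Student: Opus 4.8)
The plan is to compute $\mathfrak{g}_n$ as the tangent space at the identity of the group $\mathrm{Aut}(S^2\hat{\mathbb{C}}(n))$, whose elements were explicitly parametrized in Theorem \ref{auto-groups-thm}, and then to match the resulting infinitesimal N=2 superconformal transformations against the basis $\{L_m,\,J_m,\,G^\pm_{m\pm 1/2}\}$ of the central-charge-zero representation of the N=2 Neveu-Schwarz algebra via the exponential formulas (\ref{group-action1})--(\ref{basis-shift}). Concretely, an element of $\mathfrak{g}_n$ is obtained by linearizing the five component functions $f$, $g^\pm$, $\psi^\pm$ of $T_\sou$ about their identity values $f(z)=z$, $g^\pm(z)\equiv 1$, $\psi^\pm(z)\equiv 0$ in the parameters appearing in Theorem \ref{auto-groups-thm}; by the result of \cite{B-n2moduli} recalled above, each such tangent vector is a superderivation lying in the span of (\ref{L-notation})--(\ref{G-notation}), so it suffices to read off which superderivations actually occur and to check that their span is closed under bracket.

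For the even part, I would argue as follows. The body of any automorphism is a M\"obius transformation, so the three parameters of $f(z)=(az+b)/(cz+d)$ with $ad-bc=1$ contribute a three-dimensional even subspace. Differentiating the associated one-parameter subgroups and comparing with (\ref{group-action1}), (\ref{basis-shift}), and the displayed formula for $e^{-y(L_1-nJ_1)}\cdot(x,\varphi^+,\varphi^-)$, the $b$-direction yields $L_{-1}$, the scaling direction ($b=c=0$, $a=d^{-1}$) yields $L_0-\frac{n}{2}J_0$, and the $c$-direction yields $L_1-nJ_1$; the specific twists by $J_0$ and $J_1$ are forced because, by Theorem \ref{auto-groups-thm}, the fermionic components of an automorphism of $S^2\hat{\mathbb{C}}(n)$ must carry the factors $(cz+d)^{\pm n-1}$ (equivalently, the automorphism must be compatible with the transition function $I_n$, which involves $z^{\pm n}$). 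The one remaining free even parameter --- the unit $\epsilon\in(\bigwedge^0_{*-2})^\times$ when $|n|\geq 1$, or the free part of $\epsilon^\pm$ when $n=0$, where $\epsilon^+\epsilon^-$ is fixed modulo souls --- integrates the flow $e^{-yJ_0}\cdot(x,\varphi^+,\varphi^-)=(x,e^y\varphi^+,e^{-y}\varphi^-)$. Hence in every case the even part of $\mathfrak{g}_n$ is four-dimensional, spanned by $\{L_{-1},\,L_0-\frac{n}{2}J_0,\,L_1-nJ_1,\,J_0\}$.

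For the odd part, I would linearize in the odd parameters $\psi^\pm_j$ (the coefficients of $z^j$ in the numerators of $\psi^\pm(z)$ in Theorem \ref{auto-groups-thm}) and compare with the formulas for $e^{-\xi G^\pm_{-1/2}}$ and for $e^{-\xi G^\pm_{k-1/2}}$, $k\in\mathbb{Z}_+$, reading off $\psi^-_j\leftrightarrow G^-_{j-1/2}$ and $\psi^+_j\leftrightarrow G^+_{j-1/2}$ (including the quadratic $\varphi^+\varphi^-$ correction in $G^\pm_{k-1/2}$, which is produced automatically through the coupling of the $\psi$-variations into $f$ and $g^\pm$ by the superconformal constraint (\ref{superconformal-condition4})). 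Thus for $n\geq 2$, where $\psi^+\equiv 0$ and $\psi^-$ ranges over polynomials of degree $\leq n+1$, the odd part is $\mathrm{span}_\mathbb{C}\{G^-_{-1/2},G^-_{1/2},\dots,G^-_{n+1/2}\}$; the remaining cases $n\leq -2$, $n=\pm 1$, $n=0$ follow identically, the allowed ranges of $\psi$-coefficients in Theorem \ref{auto-groups-thm} dictating exactly which $G^\pm_{k-1/2}$ appear. That each listed span is a Lie sub-superalgebra is then automatic since it is the Lie algebra of a group; alternatively one verifies closure under bracket directly using (\ref{Virasoro-relation})--(\ref{Neveu-Schwarz-relation-last}).

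I expect the odd-parameter bookkeeping to be the main obstacle: one must track how the superconformal constraint (\ref{superconformal-condition4}) feeds the variations of $\psi^\pm$ back into $f$ and $g^\pm$ in order to confirm that a pure $\psi^\pm_k$-variation reproduces precisely the superderivation $G^\pm_{k-1/2}$ of (\ref{G-notation}) --- with its correct index shift and its quadratic term --- rather than that generator plus an extraneous even piece, and to confirm that the degree bounds on $\psi^\pm$ in Theorem \ref{auto-groups-thm} (equivalently, the restriction of the poles to $z_B=-d_B/c_B$ on the southern chart and to $z_B=-a_B/b_B$ on the northern chart) are exactly the bounds that make the corresponding $G^\pm_{k-1/2}$ extend to globally defined superderivations on $S^2\hat{\mathbb{C}}(n)$.
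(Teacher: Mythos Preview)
Your proposal is correct and follows essentially the same route as the paper: the paper computes the exponentials $e^{-yL_{-1}}$, $e^{-y(L_0-\frac{n}{2}J_0)}$, $e^{-y(L_1-nJ_1)}$, $e^{-yJ_0}$, and $e^{-\xi G^\pm_{k-1/2}}$ in the superderivation representation and observes that these reproduce precisely the parametrized automorphisms of Theorem~\ref{auto-groups-thm}, whence the theorem follows; you run the same identification in the inverse direction by linearizing the parameters of Theorem~\ref{auto-groups-thm} at the identity and matching against (\ref{L-notation})--(\ref{G-notation}). The parameter--generator dictionary you arrive at ($b\leftrightarrow L_{-1}$, $c\leftrightarrow L_1-nJ_1$, $a=d^{-1}\leftrightarrow L_0-\frac{n}{2}J_0$, $\epsilon\leftrightarrow J_0$, $\psi^\pm_j\leftrightarrow G^\pm_{j-1/2}$) is exactly the one the paper establishes via (\ref{group-action1})--(\ref{basis-shift}), and your remark that the superconformal constraint (\ref{superconformal-condition4}) produces the $\varphi^+\varphi^-$ correction in $G^\pm_{k-1/2}$ is what the paper's explicit formula for $e^{-\xi G^\pm_{k-1/2}}$ verifies.
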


\begin{rema} {\em 
From Remark \ref{line-bundle-remark}, we see that the integer $n$ is related to the degree of the line bundle over $\hat{\mathbb{C}}$ which is canonically determined by $S^2\hat{\mathbb{C}}(n)$.  Considering the coordinate transition function (\ref{define-I}),  the integer $n$ is also a measure of the asymmetry between the fermionic (odd) components of $S^2\hat{\mathbb{C}}(n)$.  Thus the dimension of the Lie algebra of infinitesimal global N=2 superconformal transformations grows, roughly, as the absolute value of the degree of the corresponding line bundle or, equivalently, the degree of fermionic asymmetry.   That is $\mathrm{dim} \, \mathfrak{g}_n = 4+4$ if $|n| \leq 2$ and $\mathrm{dim} \, \mathfrak{g}_n = 4+(|n| + 2)$ if $|n| \geq 2$.
}
\end{rema}

The reason we have written the basis in this rather unusual way --- with $L_0 - \frac{n}{2} J_0$ and $J_0$ rather than $L_0$ and $J_0$ ---  is due to the more natural correspondence with the action of the group of automorphisms, cf. Eq. (\ref{basis-shift}) above, as well as Eq. (\ref{even-part-of-g_n}) below.

For $n \in \mathbb{Z}$, let $\mathfrak{g}_n = \mathfrak{g}_n^0 \oplus \mathfrak{g}_n^1$ denote the decomposition of the Lie superalgebra into even and odd components.  Then $\mathfrak{g}_n^0$ is a Lie algebra and decomposes into the direct sum of Lie algebras as follows:
\begin{eqnarray}
\mathfrak{g}_n^0 &=& \mathrm{span}_\mathbb{C} \{ L_{-1}, \, L_0 - \frac{n}{2} J_0,  \, L_{-1}, \, L_1 - nJ_1, \, J_0\} \label{even-part-of-g_n}\\
&=& \mathrm{span}_\mathbb{C} \{  L_{-1}, \, L_0 - \frac{n}{2} J_0,  \, L_{-1}, \, L_1 - nJ_1\} \oplus \mathbb{C} J_0 \nonumber \\
&\cong& \mathfrak{sl} (2, \mathbb{C}) \oplus \mathfrak{gl}(1, \mathbb{C}), \nonumber
\end{eqnarray}
for each $n \in \mathbb{Z}$.  The isomorphism with  $\mathfrak{sl}(2, \mathbb{C})$ is given explicitly by

\begin{eqnarray}\label{sl2}
\left(\begin{array}{cc}  
0 & 1\\
0 & 0 \end{array} \right)  \ \longleftrightarrow   \  L_{-1},  & & 
\frac{1}{2} \left( \begin{array}{cc}  
1 & 0  \\
0 & -1   \end{array} \right)    \ \longleftrightarrow \  L_0 - \frac{n}{2} J_0, \\
\left(\begin{array}{cc}   
0 & 0  \\
-1 & 0 \end{array} \right)  & \longleftrightarrow  & L_1 - nJ_1.  \nonumber
\end{eqnarray}

In the case of $n = 0$, the Lie superalgebra $\mathfrak{g}_0$ is isomorphic to the orthogonal-symplectic superalgebra $\mathfrak{osp} (2|2, \mathbb{C})$; see \cite{B-n2moduli}.  This is also denoted $C(2)$ in \cite{Kac}, and is in the family of simple Lie superalgebras $C(m)$.  Explicitly, we have  
\begin{eqnarray*}
\mathfrak{osp}(2|2, \mathbb{C}) = \left\{ \biggl. \left(\begin{array}{cc|cc}   
                                         a & b & s & q \\
                                         c & -a & -r & -p \\
                                         \hline
                                         p & q  & d & 0 \\
                                         r & s & 0 & -d 
                                            \end{array} \right) \in
\mathfrak{gl}(2|2, \mathbb{C}) \; \biggr| \; a,b,c,d,p,q, r, s\in \mathbb{C} \right\} 
\end{eqnarray*}
which is the subalgebra of $\mathfrak{gl}(2|2, \mathbb{C})$ leaving a certain non-degenerate form invariant (cf. \cite{Kac}, \cite{B-n2moduli}).  The isomorphism $\mathfrak{g}_0 \cong \mathfrak{osp} (2|2, \mathbb{C})$ is given explicitly, for instance, as follows: $L_{-1}$, $L_0$, and $L_1$  are given by the embedding of $\mathfrak{sl}(2, \mathbb{C})$ into $\mathfrak{osp}(2|2, \mathbb{C})$ via
\begin{equation}
\left( \begin{array}{cc}
a & b \\
c & -a
\end{array}\right)
\longrightarrow
\left( \begin{array}{cc|cc}
a & b & 0 &0 \\
c & -a & 0 & 0\\
\hline
0 & 0 & 0 & 0 \\
0 & 0 & 0 & 0
\end{array}\right),
\end{equation}
and the isomorphism defined by (\ref{sl2}).   Then for $J_0$ we have
\begin{equation}
\left(\begin{array}{cc|cc}   
0 & 0 & 0 & 0 \\
0 & 0 & 0 & 0 \\
\hline
0 & 0 & 1 & 0 \\
0 & 0 & 0 & -1 \end{array} \right) \ \ \longleftrightarrow \ \ J_0, \\ 
\end{equation}
and for the odd components, we have  
\begin{eqnarray}
\begin{array}{ccccccc}
\quad & \left(\begin{array}{cc|cc}
0 & 0 & 0 & 1 \\   
0 & 0 & 0 & 0 \\
\hline
0 & 1 & 0 & 0 \\
0 & 0 & 0 & 0 \end{array} \right)  &\longleftrightarrow& G^+_{-\frac{1}{2}} , & \qquad
\left(\begin{array}{cc|cc}    
0 & 0 & 0 & 0 \\
0 & 0 & 0 & -1 \\
\hline
1 & 0 & 0 & 0 \\
0 & 0 & 0 & 0 \end{array} \right) &\longleftrightarrow& G^+_{\frac{1}{2}} ,  \\
\\
\quad  & \left(\begin{array}{cc|cc}
0 & 0 & 1 & 0 \\   
0 & 0 & 0 & 0 \\
\hline
0 & 0 & 0 & 0 \\
0 & 1 & 0 & 0 \end{array} \right) & \longleftrightarrow& G^-_{-\frac{1}{2}} , & \qquad 
\left(\begin{array}{cc|cc}    
0 & 0 & 0 & 0 \\
0 & 0 & -1 & 0 \\
\hline
0 & 0 & 0 & 0 \\
1 & 0 & 0 & 0 \end{array} \right) &\longleftrightarrow&  G^-_{\frac{1}{2}}.  
\end{array}
\end{eqnarray}

We next address the cases $n = \pm1$.   Let $\mathfrak{p}(2|2, \mathbb{C})$ denote the seven-dimensional Lie superalgebra which is the subalgebra of $\mathfrak{sl}(2|2, \mathbb{C})$ given by
\begin{equation}
\mathfrak{p}(2|2, \mathbb{C}) = \left\{ \left( \begin{array}{cc|cc} a & b & p &q\\
c & -a & q & r\\
\hline 
0 & s & -a & -c \\
-s & 0 & -b & a
\end{array}
\right) \; | \; a,b,c,p,q, r,s  \in \mathbb{C} \right\}.
\end{equation}
This is the simple Lie superalgebra denoted $P(1)$ in \cite{Kac} which is in the series $P(m)$ of simple Lie superalgebras.  Then $\mathfrak{g}_{\pm 1}$ is the semi-direct product of $\mathfrak{gl}(1, \mathbb{C}) \cong \mathbb{C} J_0$ with $\mathfrak{p}(2|2, \mathbb{C})$.  Explicitly, the isomorphism
\begin{eqnarray}
\qquad \mathfrak{p}(2|2, \mathbb{C}) \! \! &\cong & \! \! \mathrm{span}_\mathbb{C}  \left\{L_{-1}, \,  L_0 \mp  \frac{1}{2} J_0,  \, L_1\mp J_1, \, G^\pm_{-1/2}, \, G^\mp_{-1/2}, \, G^\mp_{1/2}, \, G^\mp_{3/2}\right\}\\
&=& \! \! \mathfrak{g}_{\pm1} \smallsetminus \mathbb{C} J_0, \nonumber
\end{eqnarray}
is given as follows: $L_{-1}$, $L_0 - \frac{n}{2} J_0$, and $L_1 - n J_1$, for $n = \pm1$,  are given by the embedding of $\mathfrak{sl}(2, \mathbb{C})$ into $\mathfrak{sl}(2|2, \mathbb{C})$ via
\begin{equation}
\left( \begin{array}{cc}
a & b \\
c & -a
\end{array}\right)
\longrightarrow
\left( \begin{array}{cc|cc}
a & b & 0 &0 \\
c & -a & 0 & 0\\
\hline
0 & 0 & -a & -c \\
0 & 0 & -b & a
\end{array}\right),
\end{equation}
and the isomorphism defined by (\ref{sl2}).   Then for the odd components, we have 
\begin{equation*}
\begin{array}{cclccclc}
\left(\begin{array}{cc|cc}  
0 & 0&0 &0 \\
0 & 0 &0&0\\
\hline
0& 1 & 0 &0\\
-1 & 0 & 0 & 0 \end{array} \right) &\longleftrightarrow  &  G^\pm_{-1/2}, & \ \  & 
2 \left( \begin{array}{cc|cc}  
0 & 0 & 1 & 0 \\
0 & 0 &0 &0\\
\hline
0& 0 & 0 &0\\
0& 0& 0& 0 \end{array} \right)  &\longleftrightarrow&  G^\mp_{-1/2}, 
\\
\\
\left( \begin{array}{cc|cc}  
0 & 0 & 0 & -1 \\
0 & 0 &-1 &0\\
\hline
0& 0 & 0 &0\\
0& 0& 0& 0 \end{array} \right)  &\longleftrightarrow&  G^\mp_{1/2}, & & 
2 \left( \begin{array}{cc|cc}  
0 & 0 & 0 & 0 \\
0 & 0 &0 &1\\
\hline
0& 0 & 0 &0\\
0& 0& 0& 0 \end{array} \right)  &\longleftrightarrow&  G^\mp_{3/2},
\end{array}
\end{equation*} 
for $n = \pm 1$, respectively.  In addition we have 
\begin{equation*}
\left(\begin{array}{cc|cc}  
0 & 0&0 &0 \\
0 & 0 &0&0\\
\hline
0& 0 & \pm 1 &0\\
0 & 0 & 0 & \pm 1 \end{array} \right) \ \ \longleftrightarrow  \ \   J_0.
\end{equation*}
Thus the semi-direct product structure $\mathfrak{g}_{\pm 1} \cong  \mathfrak{gl}(1, \mathbb{C}) \times_\sigma \mathfrak{p}(2|2, \mathbb{C}) $ is given by the homomorphism
$\sigma : \mathfrak{gl}(1, \mathbb{C}) \longrightarrow \mathrm{Der}( \mathfrak{p}(2|2, \mathbb{C}))$ with 
$\sigma_{J_0} (g) = 0$ if $g$ is even and $\sigma_{J_0} (G^\pm_{k + \frac{1}{2}}) = \pm G^\pm_{k + \frac{1}{2}}$; and we have
\begin{equation}
\mathfrak{g}_{\pm 1} \cong \left\{ \left( \begin{array}{cc|cc} a & b & p &q\\
c & -a & q & r\\
\hline 
0 & s & -a+d & -c \\
-s & 0 & -b & a+d
\end{array}
\right) \; | \; a,b,c,d,p,q, r,s  \in \mathbb{C} \right\}.
\end{equation}

If $n \geq 2$ or $n \leq 2$, then $\{ 0 \} \oplus \mathfrak{g}_n^1$ is an abelian subalgebra of $\mathfrak{g}_n$.  Furthermore, $\mathfrak{g}_n$ is the semi-direct product of $\mathfrak{g}_n^0 \cong \mathfrak{sl}(2, \mathbb{C}) \oplus \mathfrak{gl}(1, \mathbb{C})$ acting on this abelian Lie superalgebra $\{0\} \oplus \mathfrak{g}_n^1$; i.e.,  $\mathfrak{g}_n \cong \mathfrak{g}_n^0 \times_{\sigma_n} ( \{0\} \oplus \mathfrak{g}_n^1)$, where for $n \geq 2$, we have $\sigma_n : \mathfrak{g}_n^0 \longrightarrow \mathrm{Der} (\{0\} \oplus \mathfrak{g}_n^1)$ is given by $(\sigma_n)_{L_{-1}} (G^-_{k - \frac{1}{2}}) = -k G^-_{k - \frac{3}{2}}$, $(\sigma_n)_{L_0 - \frac{n}{2} J_0} (G^-_{k - \frac{1}{2}}) = \left(-k + \frac{n+1}{2} \right)G^-_{k - \frac{1}{2}}$, $(\sigma_n)_{L_1 - n J_1} (G^-_{k - \frac{1}{2}}) = (-k  + n + 1)G^-_{k + \frac{1}{2}}$, and $(\sigma_n)_{J_0} (G^-_{k - \frac{1}{2}}) = -G^-_{k - \frac{1}{2}}$, and similarly for $n \leq 2$.  

Thus to recap, we have the following corollary:
\begin{cor}\label{infinitesimals-cor}
\begin{eqnarray}
\mathfrak{g}_0  = Lie(S^2\hat{\mathbb{C}}(0)) &\cong& \mathfrak{osp} (2|2, \mathbb{C}) \\
\mathfrak{g}_{\pm1} =  Lie(S^2\hat{\mathbb{C}}(\pm1 ))  & \cong &\mathfrak{gl}(1, \mathbb{C}) \times_\sigma   \mathfrak{p}(2|2, \mathbb{C}) \\
\mathfrak{g}_{n} =  Lie(S^2\hat{\mathbb{C}}(n)) & \cong & (\mathfrak{sl}(2, \mathbb{C}) \oplus \mathfrak{gl}(1, \mathbb{C})) \times_{\sigma_n} ( \{0\} \oplus \mathfrak{g}_n^1) ,
\end{eqnarray}
for $|n| \geq 2$, where $\{ 0 \} \oplus \mathfrak{g}_n^1$ is an abelian Lie superalgebra of even dimension zero and odd dimension $|n| + 2$.
\end{cor}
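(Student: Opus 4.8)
The corollary only gathers together the structural facts that have been worked out in the discussion preceding it, so the strategy is to verify each of the three isomorphisms in turn, in every case using Theorem~\ref{infinitesimals-thm} for the explicit basis of $\mathfrak{g}_n$ inside the N=2 Neveu-Schwarz algebra and the relations (\ref{Virasoro-relation})--(\ref{Neveu-Schwarz-relation-last}) with central charge zero to compute brackets. For $n=0$ I would take the linear map sending $L_{-1},L_0,L_1,J_0,G^\pm_{-1/2},G^\pm_{1/2}$ to the $\mathfrak{gl}(2|2,\mathbb{C})$ matrices displayed just above and check that it carries the Neveu-Schwarz brackets to the supercommutators of those matrices; since the map is visibly injective on the eight-dimensional span $\mathfrak{g}_0$ and its image lies in the eight-dimensional subalgebra $\mathfrak{osp}(2|2,\mathbb{C})$, matching the brackets forces $\mathfrak{g}_0\cong\mathfrak{osp}(2|2,\mathbb{C})$. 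The bracket check is finite, and all central terms vanish automatically because we are in the central-charge-zero representation (\ref{L-notation})--(\ref{G-notation}).

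For $n=\pm1$ the first step is to identify $\mathfrak{g}_{\pm1}\smallsetminus\mathbb{C}J_0 = \mathrm{span}_\mathbb{C}\{L_{-1},\,L_0\mp\frac{1}{2}J_0,\,L_1\mp J_1,\,G^\pm_{-1/2},\,G^\mp_{-1/2},\,G^\mp_{1/2},\,G^\mp_{3/2}\}$ with $\mathfrak{p}(2|2,\mathbb{C})$ via the displayed matrices, again checking that the brackets correspond; one must confirm that this seven-dimensional span is itself closed under the Neveu-Schwarz bracket, so that it really is a subalgebra, and this is where the special value $n=\pm1$ enters, through the coefficients in (\ref{J-relation}) and (\ref{Neveu-Schwarz-relation-last}). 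The second step is to observe that $\mathbb{C}J_0$ is a one-dimensional (hence abelian) subalgebra meeting $\mathfrak{p}(2|2,\mathbb{C})$ only in $0$, that $\mathrm{ad}_{J_0}$ preserves $\mathfrak{p}(2|2,\mathbb{C})$ and acts there as $\sigma_{J_0}$ --- trivially on the even generators since $[J_0,L_m]=0=[J_0,J_m]$ for the relevant $m$, and by $[J_0,G^\pm_{k+1/2}]=\pm G^\pm_{k+1/2}$ on the odd generators by (\ref{J-relation}) --- whence $\mathfrak{g}_{\pm1}\cong\mathfrak{gl}(1,\mathbb{C})\times_\sigma\mathfrak{p}(2|2,\mathbb{C})$, consistent with the matrix model displayed above.

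For $|n|\geq2$ the odd part $\mathfrak{g}_n^1$ from Theorem~\ref{infinitesimals-thm} consists entirely of $G^-$-type generators when $n\geq2$ and entirely of $G^+$-type generators when $n\leq-2$, so every bracket among them vanishes by (\ref{J-relation}); hence $\{0\}\oplus\mathfrak{g}_n^1$ is an abelian Lie superalgebra, of even dimension $0$ and of odd dimension obtained by counting the listed generators, which is $|n|+2$. The even part $\mathfrak{g}_n^0$ is already computed in (\ref{even-part-of-g_n}) to be $\mathfrak{sl}(2,\mathbb{C})\oplus\mathfrak{gl}(1,\mathbb{C})$ via the correspondence (\ref{sl2}); it is a subalgebra, it meets $\mathfrak{g}_n^1$ trivially for parity reasons, and, since $\mathfrak{g}_n$ is a Lie superalgebra, it normalizes $\{0\}\oplus\mathfrak{g}_n^1$, the induced action being exactly the map $\sigma_n$ recorded just before the corollary (whose formulas I would read off from (\ref{Virasoro-relation})--(\ref{Neveu-Schwarz-relation-last})). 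Therefore $\mathfrak{g}_n\cong(\mathfrak{sl}(2,\mathbb{C})\oplus\mathfrak{gl}(1,\mathbb{C}))\times_{\sigma_n}(\{0\}\oplus\mathfrak{g}_n^1)$.

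I do not anticipate a genuine obstacle here; the work is the organization of finitely many bracket computations. The most delicate points are confirming that the two exceptional matrix models --- the $\mathfrak{osp}(2|2,\mathbb{C})$ realization for $n=0$ and the $\mathfrak{p}(2|2,\mathbb{C})$ realization for $n=\pm1$ --- reproduce the Neveu-Schwarz structure constants exactly (signs included), and verifying that the seven-dimensional span for $n=\pm1$ is bracket-closed so that the semi-direct decomposition is meaningful. Both are routine but must be carried through with care.
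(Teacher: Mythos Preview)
Your proposal is correct and matches the paper's approach: the corollary is presented there as a recap (``Thus to recap, we have the following corollary''), and the preceding discussion does exactly what you outline --- it exhibits the explicit matrix models for $\mathfrak{osp}(2|2,\mathbb{C})$ and $\mathfrak{p}(2|2,\mathbb{C})$, identifies the even part via (\ref{even-part-of-g_n}) and (\ref{sl2}), and for $|n|\geq 2$ notes that $[G^\pm,G^\pm]=0$ makes $\{0\}\oplus\mathfrak{g}_n^1$ abelian while recording the action $\sigma_n$ of the even part on it. Your plan to organize the argument as a finite bracket check against the displayed matrices, and to isolate bracket-closure of the seven-dimensional span in the $n=\pm1$ case as the point requiring care, is precisely the content of the paper's discussion.
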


\section{The Lie supergroup structure of $\mathrm{Aut}(S^2\hat{\mathbb{C}}(n))$}

Let 
\begin{equation}
SL(2, \mbox{$\bigwedge_{*-2}^0$}) = \left\{ \left( \begin{array}{cc} a & b \\
c & d 
\end{array}
\right) \; | \; a,b,c,d \in \mbox{$\bigwedge_{*-2}^0$}, \; ad-bc = 1 \right\},
\end{equation}
and let $GL(1, \bigwedge_{*-2}^0) = (\bigwedge_{*-2}^0)^\times$.   The group $SL(2,\bigwedge_{*-2}^0) \times GL(1, \bigwedge_{*-2}^0)$ acts on $S^2\hat{\mathbb{C}} (n)$ as automorphisms as follows:
For each $n \in \mathbb{Z}$, 
\begin{equation}
\alpha = \left( \begin{array}{ccc} a & b & 0 \\
c & d & 0\\
0 & 0& \epsilon
\end{array}
\right) \in SL(2, \mbox{$\bigwedge_{*-2}^0$}) \times GL(1, \mbox{$\bigwedge_{*-2}^0$}),
\end{equation}
define 
\begin{equation}\label{action}
\alpha \cdot_n (z, \theta^+, \theta^-) = \left( \frac{az + b}{cz + d} , \ \theta^+ \epsilon (cz + d)^{n-1}, \ \theta^- \epsilon^{-1} (cz + d)^{-n-1} \right) 
\end{equation}
for $(z, \theta^+, \theta^-) \in (\bigwedge_{*>1}^0 \smallsetminus \{-d_B/c_B \} \times (\bigwedge_{*>1}^0)_S) \times (\bigwedge_{*>1}^1)^2$.

Acting in this way, $SL(2,\bigwedge_{*-2}^0) \times GL(1, \bigwedge_{*-2}^0)$ is a double cover of the even component of the Lie supergroup $\mathrm{Aut}(S^2\hat{\mathbb{C}}(n))$ for each $n \in \mathbb{Z}$.  In particular, the even component of the Lie supergroup $\mathrm{Aut} (S^2 \hat{\mathbb{C}} (n))$, denoted by $(\mathrm{Aut} (S^2 \hat{\mathbb{C}} (n)))^0$ is given as follows:  Let $id_{SL}$ denote the identity in $SL(2, \bigwedge_{*-2}^0)$ and let $id_{GL}$ denote the identity in $GL(1, \bigwedge^0_{*-2})$. Define
\begin{eqnarray}
K_{\bar{0}} &=& \langle(-id_{SL}, - id_{GL})\rangle \label{define-K0}\\
K_{\bar{1}} &=& \langle(-id_{SL},  id_{GL})\rangle \label{define-K1}
\end{eqnarray}  
which are both normal subgroups of order two in $SL(2, \bigwedge^0_{*-2}) \times GL(1, \bigwedge^0_{*-2})$.  Then from (\ref{action}), we have 
\begin{equation}
(\mathrm{Aut} (S^2 \hat{\mathbb{C}} (n)))^0\cong (SL(2, \mbox{$\bigwedge$}^0_{*-2}) \times GL(1, \mbox{$\bigwedge$}^0_{*-2})) / K_{\bar{n}} 
\end{equation}
where $\bar{n} \in \mathbb{Z}_2$ is the equivalence class of $n$ modulo $2$.

For $\mathfrak{g}$ a Lie superalgebra, let $\exp_{\bigwedge_{*-2}} (\mathfrak{g})$ denote its corresponding Lie supergroup over the Grassmann algebra $\bigwedge_{*-2}$.   Then 
\begin{eqnarray}
\exp_{\bigwedge_{*-2}}(\mathfrak{osp}(2|2, \mathbb{C})) &=& OSP(2|2, \mbox{$\bigwedge_{*-2}$}) \label{define-osp}\\ 
\exp_{\bigwedge_{*-2}}(\mathfrak{p}(2|2, \mathbb{C})) &=& P(2|2, \mbox{$\bigwedge_{*-2}$}). \label{define-p}
\end{eqnarray}   (These are the Lie supergroups denoted $C(2)$ and $P(1)$, respectively, studied in for instance \cite{D}.) We will also denote by $K_{\bar{0}}$ the image of $K_{\bar{0}}$ in  $OSP(2|2, \bigwedge_{*-2})$ given by the group generated by the negative of the $4\times 4$ identity matrix; and we will also denote by $K_{\bar{1}}$ the image of $K_{\bar{1}}$ in $P(2|2, \bigwedge_{*-2})$ given by the negative of the $4\times 4$ identity matrix.

For $n \in \mathbb{Z}_+$, the set $(\bigwedge_{*-2}^1)^n$ is an abelian group under addition.   If $n\geq 2$, then $(\bigwedge_{*-2}^1)^{n+2}$ acts as automorphisms of $S^2\hat{\mathbb{C}}(n)$ by 
\begin{multline}
(\psi^-_{n+1}, \dots, \psi^-_1, \psi^-_0) \cdot (z, \theta^+, \theta^-) = (z + \theta^+ (\psi^-_{n+1} z^{n+1}  + \cdots + \psi^-_1 z+ \psi^-_0), \\
\; \theta^+, \, \psi^-_{n+1} z^{n+1} + \cdots + \psi^-_1 z + \psi^-_0 + \theta^-),
\end{multline}
for $\psi^-_j \in \bigwedge_{*-2}^1$.  Analogously, if $n \leq 2$,  then $(\bigwedge_{*-2}^1)^{-n+2}$ acts as automorphisms of $S^2\hat{\mathbb{C}}(n)$ by 
\begin{multline}
(\psi^+_{-n+1}, \dots, \psi^+_1, \psi^+_0) \cdot (z, \theta^+, \theta^-) = (z + \theta^- (\psi^+_{-n+1} z^{-n+1} + \cdots + \psi^+_1 z\\
 + \psi^+_0), \, \psi^+_{-n+1} z^{-n+1}  + \cdots + \psi^+_1 z + \psi^+_0 +
 \theta^+, \,  \theta^-),
\end{multline}
for $\psi^+_j \in \bigwedge_{*-2}^1$.  

Thus we have the following theorem:
\begin{thm}\label{last-cor}  The automorphism groups of the N=2 superconformal super-Riemann spheres $S^2\hat{\mathbb{C}}(n)$, for $n \in \mathbb{Z}$, are
\begin{eqnarray}
\mathrm{Aut}(S^2\hat{\mathbb{C}}(0)) &\cong &OSP(2|2, \mbox{$\bigwedge$}_{*-2})/K_{\bar{0}} \\
\qquad \mathrm{Aut}(S^2\hat{\mathbb{C}}(\pm1)) &\cong & GL(1, \mbox{$\bigwedge$}_{*-2}^0) \ltimes (P(2|2, \mbox{$\bigwedge$}_{*-2})/K_{\bar{1}} )\\
\mathrm{Aut}(S^2\hat{\mathbb{C}}(n)) &\cong & \left((SL(2,  \mbox{$\bigwedge$}_{*-2}^0) \times GL(1, \mbox{$\bigwedge$}_{*-2}^0))/K_{\bar{n}} \right) \ltimes (\mbox{$\bigwedge$}_{*-2}^1)^{|n| + 2} ,
\end{eqnarray}
for $|n| \geq 2$, where $OSP(2|2, \mbox{$\bigwedge$}_{*-2})$ and $P(2|2, \mbox{$\bigwedge$}_{*-2})$ are as defined in (\ref{define-osp}) and (\ref{define-p}) above,  and the $K_{\bar{n}}$, for $n \in \mathbb{Z}$, are defined by (\ref{define-K0}) and (\ref{define-K1}).
\end{thm}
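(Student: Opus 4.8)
The plan is to read off the group structure from Theorem \ref{auto-groups-thm} together with the explicit group actions and Lie-superalgebra identifications recorded immediately before the statement. By Theorem \ref{auto-groups-thm} the map $T \mapsto T_\sou$ is injective, and its image is the finite-parameter family consisting of an $SL(2)$-datum $(a,b,c,d) \in (\bigwedge_{*-2}^0)^4$ with $ad-bc=1$, a unit $\epsilon \in (\bigwedge_{*-2}^0)^\times$ (a pair $\epsilon^\pm$ when $n=0$, constrained by $\epsilon^+\epsilon^- = 1-\psi_1^+\psi_0^- - \psi_1^-\psi_0^+$), and odd parameters $\psi^\pm_j \in \bigwedge_{*-2}^1$, with $f^\pm, h^\pm$ and all remaining component functions determined by these. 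The gluing argument in the proof of Theorem \ref{auto-groups-thm} (cases (i), (ii)) shows conversely that every such parameter tuple defines a global N=2 superconformal automorphism, so $\mathrm{Aut}(S^2\hat{\mathbb{C}}(n))$ is, as a set, exactly this parameter space; it remains to determine the group law and recognize the answer.

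Next I would compose two automorphisms via their southern representatives $T_\sou$, using (\ref{action}), (\ref{psi-tilde}), and (\ref{g-tilde}), and read off how the parameters combine. The computation shows that every $T$ factors (uniquely) as $T = T_{\mathrm{even}} \circ T_{\mathrm{odd}}$, where $T_{\mathrm{even}}$ is the automorphism (\ref{action}) attached to an element of $SL(2,\bigwedge_{*-2}^0) \times GL(1,\bigwedge_{*-2}^0)$ and, for $|n|\ge 2$, $T_{\mathrm{odd}}$ is an element of the abelian group $(\bigwedge_{*-2}^1)^{|n|+2}$ acting by the unipotent formulas displayed before the theorem; conjugating $T_{\mathrm{odd}}$ by $T_{\mathrm{even}}$ and using (\ref{action}) exhibits $SL(2,\bigwedge_{*-2}^0) \times GL(1,\bigwedge_{*-2}^0)$ as normalizing the abelian subgroup, which gives the semidirect product and, at the Lie-algebra level, matches the homomorphism $\sigma_n$ of Corollary \ref{infinitesimals-cor}. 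The discrete quotient appears because replacing $(a,b,c,d,\epsilon)$ by $(-a,-b,-c,-d,(-1)^{n-1}\epsilon)$ leaves the automorphism (\ref{action}) unchanged --- the factors $(cz+d)^{n-1}$ and $\epsilon$ each acquire $(-1)^{n-1}$ and cancel --- so the kernel of the $SL(2,\bigwedge_{*-2}^0) \times GL(1,\bigwedge_{*-2}^0)$ action is exactly the order-two group $K_{\bar n}$ (namely $K_{\bar 0}$ when $n$ is even, $K_{\bar 1}$ when $n$ is odd), and checking that no further identifications occur among parameter tuples shows the even component of $\mathrm{Aut}(S^2\hat{\mathbb{C}}(n))$ is precisely $(SL(2,\bigwedge_{*-2}^0) \times GL(1,\bigwedge_{*-2}^0))/K_{\bar n}$, in agreement with the double-cover statement already recorded.

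For $n=0$ and $n=\pm1$ the non-abelian factor is identified with $OSP(2|2,\bigwedge_{*-2})$ and $P(2|2,\bigwedge_{*-2})$ by exhibiting the explicit $(2|2)$-supermatrix built from the parameters $(a,b,c,d,\epsilon^\pm,\psi^\pm_j)$ --- the exponentials of the supermatrices written down for $\mathfrak{osp}(2|2,\mathbb{C})$ and $\mathfrak{p}(2|2,\mathbb{C})$ in Section 5 --- and verifying that supermatrix multiplication reproduces the composition law found above; this, with (\ref{define-osp})--(\ref{define-p}), gives $\mathrm{Aut}(S^2\hat{\mathbb{C}}(0)) \cong OSP(2|2,\bigwedge_{*-2})/K_{\bar 0}$ (the case already treated in \cite{B-n2moduli}) and, following the same template with $\mathfrak{p}(2|2,\mathbb{C})$ in place of $\mathfrak{osp}(2|2,\mathbb{C})$ and the external factor $GL(1,\bigwedge_{*-2}^0) \cong \mathbb{C} J_0$ acting through $\sigma$, the isomorphism $\mathrm{Aut}(S^2\hat{\mathbb{C}}(\pm1)) \cong GL(1,\bigwedge_{*-2}^0) \ltimes (P(2|2,\bigwedge_{*-2})/K_{\bar 1})$.

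I expect the main obstacle to be the bookkeeping in two places: verifying that the explicit parameter families of Theorem \ref{auto-groups-thm} are closed under composition and that the composition is exactly the asserted semidirect product --- in particular that the even and odd factors intersect only in the identity and that $K_{\bar n}$ accounts for all coincidences among parameters --- and matching conventions so that the exponentiated supermatrices of Section 5 realize $OSP(2|2,\bigwedge_{*-2})$ and $P(2|2,\bigwedge_{*-2})$ with exactly the stated quotients $K_{\bar 0}$, $K_{\bar 1}$ rather than up to an unaccounted covering.
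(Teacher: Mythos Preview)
Your proposal is correct and follows essentially the same approach as the paper: the paper's ``proof'' is precisely the discussion in Section~6 preceding the statement, which assembles Theorem~\ref{auto-groups-thm}, the explicit action~(\ref{action}) with its kernel $K_{\bar n}$, the abelian odd action for $|n|\ge 2$, and the Lie-superalgebra identifications of Section~5 exponentiated via (\ref{define-osp})--(\ref{define-p}), then concludes with ``Thus we have the following theorem.'' Your outline is in fact more explicit than the paper about verifying closure under composition and the semidirect-product structure --- these are exactly the points the paper leaves to the reader --- so the bookkeeping you flag as the main obstacle is genuinely where the work lies, but there is no missing idea.
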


\end{document}